\newtheorem{definition}{Definition}
\newtheorem{assumption}{Assumption}
\newtheorem{corollary}{Corollary}
\newtheorem{theorem}{Theorem}
\newtheorem{remark}{Remark}
\newtheorem{lemma}{Lemma}
\DeclareMathOperator\supp{supp} 
\DeclareMathOperator\spark{spark}
\DeclareMathOperator*{\argmin}{arg\,min}
\definecolor{myblue}{rgb}{0,0,0.5}
\numberwithin{equation}{section}
\patchcmd{\endmyassumption}{\@endpefalse}{}{}{}
\providecommand{\keywords}[1]{\textbf{\textit{Keywords:}} #1}
\date{October, 2025}
\title{Unique Reconstruction From Mean-Field Measurements}
\author[1]{Narcicegi Kiran}
\author[1]{Tiago Pereira}
\affil[1]{Instituto de Ci\^encias Matem\'aticas e Computac\~ao, Universidade de S\~ao Paulo, São Carlos, 13566-590, Brazil}
\begin{document}
\maketitle

\begin{abstract}
We address the inverse problem of reconstructing both the structure and dynamics of a network from mean-field measurements, which are linear combinations of node states. This setting arises in applications where only a few aggregated observations are available, making  network inference  challenging. We focus on the case when the number of mean-field measurements is smaller than the number of nodes. To tackle this ill-posed recovery problem, we propose a framework that combines localized initial perturbations with sparse optimization techniques. We derive sufficient conditions that guarantee the unique reconstruction of the network’s adjacency matrix from mean-field data and enable recovery of node states and local governing dynamics. Numerical experiments demonstrate the robustness of our approach across a range of sparsity and connectivity regimes. These results provide theoretical and computational foundations for inferring high-dimensional networked systems from low-dimensional observations.\\

\keywords{network reconstruction, mean-fields, sparse recovery}
\end{abstract}
\newpage
\tableofcontents

\section{\label{sec:level1}Introduction}
Networks of dynamical systems model the behavior of complex systems in biology \cite{winfree1967biological}, neuroscience \cite{ermentrout2010mathematical}, and physics~\cite{newman2018networks}. By encoding interaction patterns among components, these models help explain emergent collective phenomena such as synchronization and critical transitions~\cite{eroglu2017sync,scheffer2009early,liu2024early}. The emergence of such dynamics is dictated by the network structure: connectivity changes can induce qualitative behavior shifts. This has motivated great interest in identifying  interaction patterns in systems where disruptions may trigger pathological states \cite{thijs2019epilepsy}.

Yet, the network structure is not directly observable in most real-world systems \cite{hawrylycz2012anatomically}. Due to experimental constraints, we rarely have direct access to information about who interacts with whom. A central challenge is to recover the hidden structure using only observable outputs \cite{
wang2018inferring, novaes2021recovering, 
tan2023backpropagation, delabays2025hypergraph}. A common assumption in network reconstruction is that the full state of the system is accessible, that is, time series data from all nodes are available. Techniques based on least squares \cite{brunton2016discovering}, compressive sensing \cite{candes2005decoding, candes2006stable}, Bayesian estimation \cite{stankovski2017coupling}, and exploiting the ergodic properties of the system \cite{eroglu2020revealing} have been developed to recover both the governing equations and thereby the interaction structure. When combined with assumptions about the network dynamics, these approaches can  guarantee the uniqueness of the reconstructed structure \cite{pereira2023robust}.

Accessing the states of all units is unrealistic in many practical applications. Limitations in spatial resolution, signal quality, or system size  restrict the scope of available measurements. For instance, in neuroscience, it remains infeasible to monitor every neuron in a brain region \cite{zhao2023tracking}; in power grids or sensor arrays, measurements are limited to aggregated or strategically sampled data \cite{liu2013observability, haber2017state, casadiego2017model, schaeffer2018extracting, ding2024deep, berry2025reconstruction}. In these contexts, one considers mean-field observations that are aggregated sums of node states. Such measurements arise in experiments where the recorded signals reflect the collective activity of many units and are often the only accessible data \cite{buzsaki2012origin}.

We address the case where the governing dynamics are unknown and the system is observed exclusively through a mean-field measurement. That is, only a linear combination of groups of node states is accessible and not the node states themselves. This mean-field measurement is common in applications and gives rise to further mathematical challenges during recovery. The measurements are not one-to-one when the number of mean fields is smaller than the number of nodes.   The central question is whether and how structural or dynamical information about the network can be recovered from such compressed observations.

\subsection{This Paper's Contributions:}

We tackle two levels of network reconstruction: topological reconstruction aiming to recover the network’s adjacency matrix, and full network reconstruction aiming to recover the underlying governing equations.

The key idea behind our approach is to establish a one-to-one correspondence between the support of the system states and the network's adjacency matrix. This correspondence is enabled by a mechanism we call pinching. We consider a class of initial conditions that begin as localized pulses at individual nodes, called pinching initial conditions. By measuring the resulting mean-field responses over time, we isolate the influence of each node's local neighborhood on the global dynamics.

In the topological reconstruction setting, we show that the graph’s maximum out-degree governs the number of required mean-field measurements. To formalize this, we introduce two key notions: the weak topological reconstruction condition (wTRC) and the strong topological reconstruction condition (sTRC). Both conditions ensure the uniqueness of the reconstructed topology. The wTRC is based on support-level information, while the sTRC allows for efficient recovery via convex optimization.

We prove that under the wTRC, topological reconstruction is guaranteed when the number of mean-field measurements is at least twice the maximum out-degree of the graph. The sTRC, in turn, guarantees the uniqueness of the recovered states through a convex program when the mean-field measurement matrix satisfies certain conditions. Importantly, our results imply that for many practical cases, such as sparse or random networks, the number of required mean-field measurements can be significantly smaller than the network size. For instance, in sparse random graphs with $N$ nodes, the number of measurements may scale as $\log^2 N$.

We propose a two-stage strategy for full network reconstruction: (i) uniquely recover the system states, and (ii) infer the local interaction dynamics from these states. For inference, we assume that each node has a candidate dictionary containing the spanning functions that exactly represent its dynamics. We demonstrate that the number of measurements depends on the interaction between the local dynamics and the network's sparsity. This trade-off is explicit in our theoretical bounds to achieve structural and dynamical recovery.
We validate our theoretical results through numerical experiments, demonstrating the accuracy and efficiency of the proposed reconstruction methods in various regimes.

This paper is organized as follows. Section \ref{setting-sec} presents the model setup and introduces the notion of mean-field measurements under pinching initial conditions. Section \ref{trc-section} is devoted to topological reconstruction: we define the weak and strong topological reconstruction conditions (wTRC and sTRC), and provide the theoretical guarantees for uniqueness and recovery. Section \ref{full-rec-sect} addresses full network reconstruction, detailing the two-stage recovery approach and analyzing the interplay between network sparsity and local dynamics. Section \ref{numerics-sec} contains numerical experiments that illustrate and validate the theoretical results. We conclude in Section \ref{proof-sec} providing the proofs of the main results.

\section{Setting and Main Problem} \label{setting-sec} 
Our graph notation follows the Ref. \cite{west2001introduction}. Let \(G\) be an unweighted directed graph without self-loops, and \([N]\) be the set \(\{1,\hdots,N\}\) for \(N\in \mathbb{N}\). The out-degree \( d_q \) of a vertex \( q \in [N] \) is the number of edges originating from vertex \( q \). The maximum out-degree of the graph \( G \) is denoted by \( \Delta(G) \). In addition, the first-level set \(L_1(q)\) consists of the vertices that receive edges from \(q \in [N] \). Next, we introduce the setting and the main problem.

Consider the network given by the discrete-time dynamical system 
\begin{equation}\label{eqdiffus}
\begin{split}
&{x}_{i}(t+1) = f_i\left(x_{i}(t) \right) +\sum_{j=1}^{N} A_{ij} h_{ij} (x_{i}(t), x_{j}(t)), \qquad \text{for every } i \in [N],
\end{split}
\end{equation}
where $A=(A_{ij})$ is the adjacency matrix of the graph \(G\) where $A_{ij}=1$ if $i$ receives input from $j$ and zero otherwise, \(f_i:\mathbb{R} \rightarrow \mathbb{R}\) describes the isolated dynamics, \(h_{ij}:\mathbb{R} \times \mathbb{R} \rightarrow \mathbb{R}\) is the coupling function, and \(x_i\in \mathbb{R}\) represents the state of the $i$th vertex of $G$. We call $x^T=(x_1,\cdots, x_N)$ the state vector. We make the following: 

\begin{assumption}\label{isolated_ass} 
$f_i(0)=0$ for every $i\in[N]$.
\end{assumption}

\begin{assumption}\label{assumption_about_coupling}
$    h_{ij}(0, v) \neq 0 \text{  for every } v \in B_{\delta}(0)\setminus \{0\}  ,
$
for sufficiently small $\delta$, and
$h_{ij}(0,0) = 0,$
 for all \(i,j \in [N]\).
\end{assumption}

The existence of a resting state, that is, the zero vector, is then guaranteed. Diffusive interaction satisfies our assumption \ref{assumption_about_coupling}.
We are interested in the case where the state vector $x\in \mathbb{R}^N$ is generated by a localized pulse. 
\begin{definition}
We regard the following initial condition
\begin{equation}
x_i^q(0)=\epsilon_q\delta_{iq}
\end{equation}
 as \(q\)-pinching where \(q\in [N]\), \(\delta_{iq}\) is the Kronecker delta, and \(\epsilon_q\in B_{\delta}(0)\setminus \{0\} \), where $\delta$ is sufficiently small. We denote a state vector with \(q\)-pinching as  \(x^q(t)\) and \(i\)th component of the state vector with \(q\)-pinching as  \(x_i^q(t)\). 
\end{definition}

Next, we take mean-field measurements as a linear combination of the states $\{x_i\}_{i=1}^N$.  We consider $P$ such mean-field measurements as follows:
 
\begin{assumption}\label{measurement_matrix}
  We assume that $\phi\in \mathbb{R}^{P\times N }$  where $P<N$, and refer to it as the measurement matrix. The mean-field measurements $y$ of size $P$ is generated by 
\begin{equation}\label{experiment}
    y=\phi x,
\end{equation}
where $x\in\mathbb{R}^N$ is the state vector, $\phi$ is the measurement matrix.\footnote{In the literature, $\phi$ is sometimes called the encoder \cite{baraniuk2008simple} or the information operator \cite{donoho2006compressed}.} 
\end{assumption}
Here, $\phi$ and $y$ are known and the goal is to recover the vector $x$.
Let $x^q(t)$ be the iterated state vector of the $q$-pinching initial condition $x^q(0)$ at iteration step $t$. We denote the mean-field measurements of size $P$, generated by $x^q(t)$, as $y^q(t)$, i.e., $y^q(t) = \phi_q x^q(t)$. The measurement matrices don't need to be distinct.

\textbf{Main Problem:} We address the inverse problem where both the network and the state vectors are unknown, while $\{\phi_q\}_{q=1}^N$ and the mean-fields $\{y^q(1)\}_{q=1}^N$ are known.\footnote{This problem is  referred to as an inverse problem or a decoding problem.} Our objective is to recover the network from the mean-fields.

Reconstructing a network from mean-field measurements is an ill-posed problem. The main challenge lies in the ambiguity introduced by the non-injectivity of the measurement matrix when $P<N$, thus different state vectors can yield the same observation. The reconstruction can lead to multiple distinct network structures.

In the following chapters, we explore how to ensure the unique reconstructability of the network for the given number of mean-field measurements. We propose a twofold methodology: first, we focus on recovering the state vectors and adjacency matrix $A$. Second, we recover $(f,h)$ (see Figure \ref{scheme}).

\begin{figure}[h!]
    \centering
\includegraphics[width=1\textwidth]{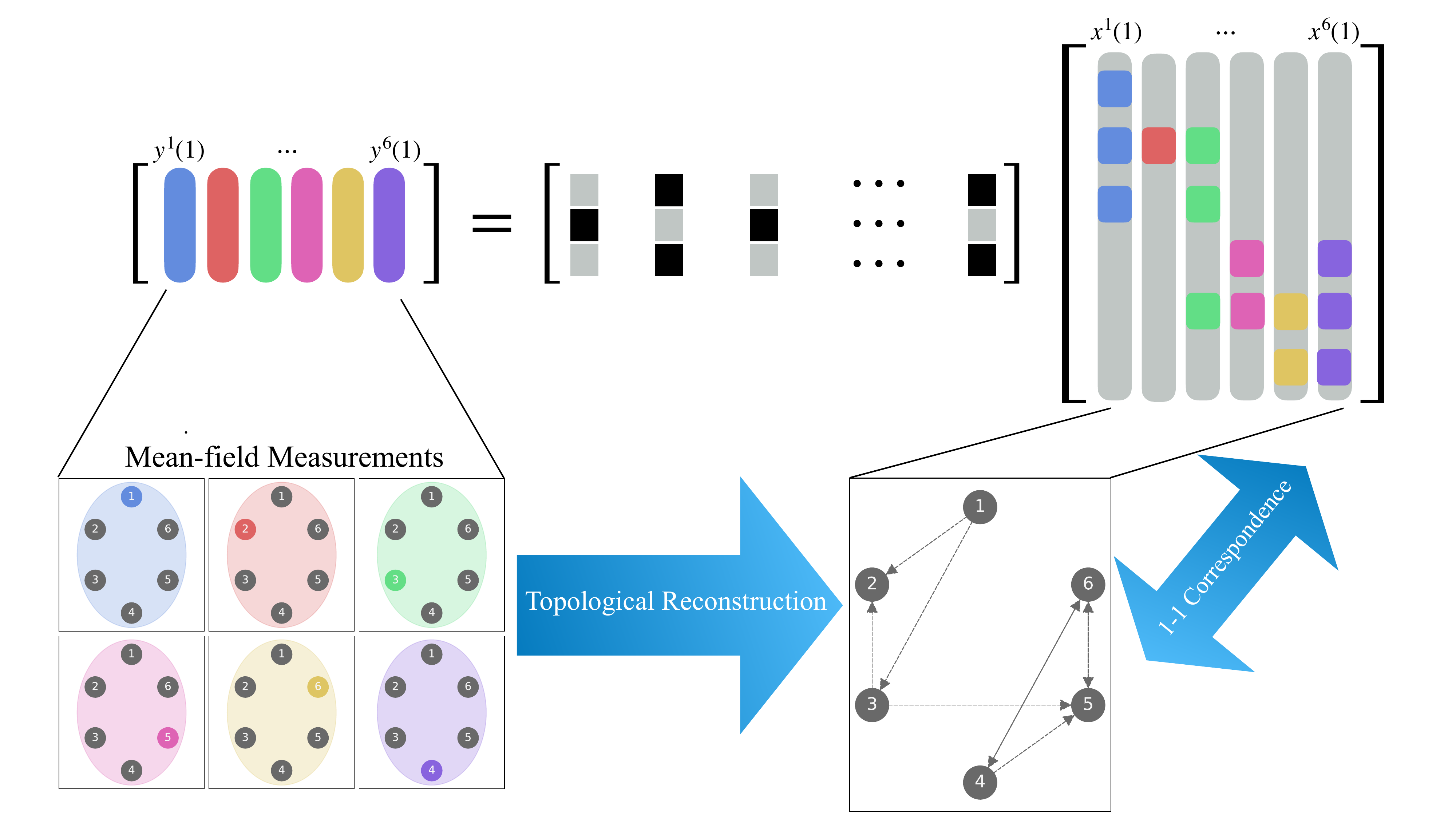}
\caption{ The light-colored ellipses represent the mean-field measurements. A set of mean-field measurements, generated through $q$-pinching initial conditions, is provided, while the state vectors remain unknown.
}
\label{scheme}
\end{figure}

\section{Topological Reconstruction from Mean-Fields}\label{trc-section}

The failure of unique reconstruction from mean-field measurements stems from the non-injective nature of the measurement matrices. One approach to obtain uniqueness is to ensure injectivity within the subspace of sparse state vectors. Roughly speaking, a vector is sparse when many of its entries are zero. Consider the support of a vector, defined as
\begin{equation}
    \supp(x):=\{i\mid x_i\neq 0\}, \quad \text{and} \quad 
\|x \|_0 := \#\{i   \mid x_i \neq 0\}.
\end{equation}

We say that a vector is $s$-sparse when \(\| x\|_0 \le s\). 
Using sparsity we can overcome the lack of injectivity. This is captured in the concept of spark.

\begin{definition}
The spark of a matrix $\phi\in \mathbb{R}^{P\times N}$ where $P<N$, is defined as the size of the smallest linearly dependent subset of its columns, which is given by
\begin{equation}
    \spark(\phi)= \min\{\|x\|_0: \phi x= 0, x\neq 0\}.
\end{equation}
If $\spark(\phi)=P+1$, it is called full-spark \cite{alexeev2012full}.
\end{definition} 
If $x\in \mathbb{R}^N$ is $s$-sparse, then  the inverse problem \(\phi x = y\) has a unique $s$-sparse solution provided that every set of $2s$ columns of $\phi$ is linearly independent. 
Thus, if $2\|x\|_0<\spark(\phi)$, then $x$ can be uniquely recovered (see Lemma \ref{spark_sparsest}).
We will show that, using these concepts, we can uniquely recover the states from mean-field measurements and the network topology.  We define the topological reconstruction as follows:  

\begin{definition} Recovering the adjacency matrix \(A\) of the underlying graph of the network $\mathcal{G}$ is called the topological reconstruction. 
\end{definition}
\begin{remark}\label{trc}
Notice that \(\{L_1(q)\}_{q=1}^N\) uniquely determines the adjacency matrix \(A\) of the underlying graph of the network $\mathcal{G}$ since \(L_1(q)\) consists of non-zero elements of the \(q\)th column of the adjacency matrix, i.e.,
\begin{equation}
    A_{ij}=\begin{cases}
        1 \text{ if } i\in L_1(j),\\0 \text{ if } i\not\in L_1(j),
    \end{cases} \text{ for all }i,j\in [N] .
\end{equation}
\end{remark}

We present two topological reconstruction conditions, referred to as the strong and weak conditions. This nomenclature reflects the distinction that the weak condition is derived from an abstract argument for unique existence, while the strong condition is computationally tractable through convex optimization. 

\subsection{Weak Topological Reconstruction Condition (wTRC)} \label{wtrc}

\begin{theorem}\label{topologicalcondition}(wTRC)
Let $\mathcal{G}=(G,f,h)$ be a network dynamical system of size $N$ with maximum out-degree $\Delta(G)$, satisfying Assumptions \ref{isolated_ass} and \ref{assumption_about_coupling}. Let $P>2\Delta(G)+1$, and let $\phi_q\in \mathbb{R}^{P\times N}$ be full-spark measurement matrices for each $q\in [N]$. Assume that mean-fields
\begin{equation}
    y^q(1)=\phi_q x^q(1)
\end{equation}        
 are known for all $q\in[N]$. Then, the network is uniquely topologically reconstructible from mean-field measurements. That is,
 given $\{y^q(1)\}_{q=1}^N$, for each $q\in [N]$, there exists a unique solution $x_*^q(1)$ to 
\begin{equation}\label{P0} 
 \min\|\tilde{x}^q(1)\|_0 \text{  subject to } y^q(1) = \phi_q \tilde{x}^q(1),
\end{equation}
such that 
\begin{equation}
    x_*^q(1) =x^q(1).
\end{equation}  
Moreover, there exists a one-to-one correspondence between $\supp(x^q(1))$ and $L_1(q)$.
\end{theorem}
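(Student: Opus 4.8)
The plan is to compute the one-step iterate $x^q(1)$ explicitly, read off its support, and then use sparsity to promote that support-level identification into a genuine unique-recovery statement via the spark machinery already introduced.

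\textbf{Step 1: evaluate the dynamics under $q$-pinching.} Starting from $x_i^q(0)=\epsilon_q\delta_{iq}$, I would substitute into the update rule \eqref{eqdiffus}. For a coordinate $i\neq q$ the isolated term vanishes by Assumption \ref{isolated_ass} ($f_i(0)=0$), and since only $x_q^q(0)$ is nonzero the single surviving coupling term is $j=q$, giving
\[
x_i^q(1)=A_{iq}\,h_{iq}(0,\epsilon_q),\qquad i\neq q.
\]
By Assumption \ref{assumption_about_coupling}, $h_{iq}(0,\epsilon_q)\neq 0$ because $\epsilon_q\in B_{\delta}(0)\setminus\{0\}$, so this entry is nonzero \emph{precisely} when $A_{iq}=1$, i.e.\ when $i\in L_1(q)$. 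The diagonal entry $x_q^q(1)=f_q(\epsilon_q)+\sum_j A_{qj}h_{qj}(\epsilon_q,0)$ is left unconstrained and may be nonzero.

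\textbf{Step 2: the support--neighborhood correspondence.} From Step 1 one gets $\supp(x^q(1))\setminus\{q\}=L_1(q)$. Since $G$ has no self-loops, $q\notin L_1(q)$, so the identity map restricted to $\supp(x^q(1))\setminus\{q\}$ is the claimed bijection onto $L_1(q)$, with the known index $q$ simply discarded. This is the one-to-one correspondence asserted in the theorem, and by Remark \ref{trc} recovering $L_1(q)$ for every $q$ reconstructs $A$ column by column.

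\textbf{Step 3: sparsity and unique recovery.} I would bound $\|x^q(1)\|_0\le |L_1(q)|+1\le \Delta(G)+1$, the extra unit accounting for the possible diagonal entry. The full-spark hypothesis gives $\spark(\phi_q)=P+1$, so the assumption $P>2\Delta(G)+1$ yields $2\|x^q(1)\|_0\le 2\Delta(G)+2<P+1=\spark(\phi_q)$. Lemma \ref{spark_sparsest} then guarantees that $x^q(1)$ is the unique sparsest vector consistent with $y^q(1)=\phi_q x^q(1)$; hence the $\ell_0$-minimizer $x_*^q(1)$ of \eqref{P0} satisfies $x_*^q(1)=x^q(1)$.

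\textbf{Main obstacle.} The computation itself is routine; the only delicate point is the diagonal entry $x_q^q(1)$, which Assumption \ref{assumption_about_coupling} does \emph{not} force to vanish (that assumption constrains $h$ only through a zero first argument). This single possibly-nonzero self-term is what raises the sparsity bound from $\Delta(G)$ to $\Delta(G)+1$, and is therefore exactly what the hypothesis $P>2\Delta(G)+1$ (rather than merely $P>2\Delta(G)-1$) is calibrated to absorb. I would make the bijection in Step 2 explicitly exclude $q$ so that this self-term never contaminates the identification of $L_1(q)$, even when it is nonzero.
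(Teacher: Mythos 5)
Your proposal is correct and follows essentially the same route as the paper's own proof: the explicit one-step computation is the paper's Lemma \ref{pinch}, the support-to-$L_1(q)$ identification with sparsity bound $\|x^q(1)\|_0\le d_q+1$ is Lemma \ref{havingL1} together with Remark \ref{trc}, and the final spark argument is Lemma \ref{spark_sparsest} combined with full-spark exactly as in the paper. Your only deviation is stylistic—you handle the possibly nonzero diagonal entry $x_q^q(1)$ uniformly by discarding the index $q$, whereas the paper splits into the two cases $x_q^q(1)\neq 0$ and $x_q^q(1)=0$ with an explicit bijection $\Theta_1^q$ versus the identity map—and your observation that the self-term is precisely what the hypothesis $P>2\Delta(G)+1$ absorbs matches the paper's accounting.
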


This result investigates whether topological reconstruction from mean-field measurements is possible, providing an affirmative answer to this question and showing that 
the network topology can be reconstructed from the support of these vectors. 
Moreover, from an applied perspective, our result provides a framework for determining the minimum number of mean-field measurements needed for unique reconstruction, offering guidelines for experimental design and data collection.
\begin{remark}
    Since $\phi\in \mathbb{R}^{P\times N}$ is full-spark, the existence of solutions follows from the fact that full-spark implies full row rank and the column space $\mathrm{Col}(\phi)$ spans all of  $\mathbb{R}^P$, ensuring that at least one solution exists for any  $y \in\mathbb{R}^P$.
\end{remark}
\begin{corollary} \label{first_corol} (wTRC)
Let \(\{\phi_q\}_{q=1}^N\) be measurement matrices where each $\phi_q \in \mathbb{R}^{P \times N}$ is a Gaussian random matrix whose entries are i.i.d. Gaussian with mean zero and variance \(1/P\).
Then, with probability one, the matrices \(\{\phi_q\}_{q=1}^N\) 
satisfy the full-spark property. Consequently, for any network dynamical system satisfying the assumptions of Theorem \ref{topologicalcondition}, the network is uniquely topologically reconstructible from mean-field measurements almost surely.
\end{corollary}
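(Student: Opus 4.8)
The plan is to reduce the corollary to a single purely linear-algebraic fact, namely that a Gaussian random matrix of the stated form is full-spark with probability one. Once this is established for one matrix, the statement for the finite family $\{\phi_q\}_{q=1}^N$ follows by a union bound, and unique topological reconstructibility is then immediate by invoking Theorem \ref{topologicalcondition}, whose remaining hypothesis $P > 2\Delta(G)+1$ is assumed.

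First I would unpack the full-spark property in a form amenable to a genericity argument. Since $\phi_q \in \R^{P \times N}$ with $P < N$, any $P+1$ columns already live in $\R^P$ and are therefore linearly dependent, so $\spark(\phi_q)\le P+1$ always. Hence $\phi_q$ is full-spark precisely when every set of $P$ columns is linearly independent, equivalently when every $P\times P$ submatrix $\phi_q^{S}$ formed by a column index set $S\subset[N]$ with $|S|=P$ satisfies $\det(\phi_q^{S})\neq 0$. This recasts ``full-spark'' as the simultaneous nonvanishing of finitely many determinants.

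Next I would fix one such $S$ and show $\Pr[\det(\phi_q^{S})=0]=0$. The determinant $\det(\phi_q^{S})$ is a polynomial in the $P^2$ entries of the submatrix, and it is not the zero polynomial, since it equals $1$ on the identity matrix. The zero set of a nonzero polynomial on $\R^{P^2}$ is a proper algebraic variety and thus has Lebesgue measure zero. Because the entries are i.i.d. Gaussian, their joint law is absolutely continuous with respect to Lebesgue measure on $\R^{P^2}$, so the event $\{\det(\phi_q^{S})=0\}$ carries probability zero. A union bound then closes the argument: there are only $\binom{N}{P}$ choices of $S$ and $N$ matrices $\phi_q$, a finite collection in total, and a finite union of null events is null. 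Consequently, with probability one every $\phi_q$ is full-spark simultaneously; on this probability-one event all hypotheses of Theorem \ref{topologicalcondition} hold, and its conclusion yields the unique topological reconstruction, proving the corollary.

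The one step carrying genuine content is the measure-zero argument for the determinant: one must confirm both that $\det(\phi_q^{S})$ is a genuinely nonzero polynomial and that absolute continuity of the Gaussian law transfers the Lebesgue-null conclusion to a probability-zero conclusion. This is standard, and everything else is routine bookkeeping with the union bound and the reduction of full-spark to nonvanishing minors; I do not anticipate a substantive obstacle.
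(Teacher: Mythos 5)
Your proof is correct and follows essentially the same route as the paper's: reduce full-spark to the nonvanishing of all $P \times P$ minors, show each determinant vanishes only on a Lebesgue-null set, transfer this to probability zero via absolute continuity of the Gaussian law, and conclude with a finite union bound over submatrices and over the $N$ matrices. If anything, your justification of the null-set step (the determinant is a nonzero polynomial, hence its zero set is a proper algebraic variety of measure zero) is more precise than the paper's appeal to mere continuity of the determinant, which by itself would not suffice.
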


We illustrate our result for Erd\H{o}s-Rényi (ER) graphs. We denote by \( G(N, p) \) an undirected Erd\H{o}s-Rényi random graph with \( N \) vertices, where each edge is included independently with probability \( p \). The expected degree \( \mathbb{E}[d] \) of such a graph is  \( (N - 1)p \). 
We show that for the Erd\H{o}s-Rényi model, the minimum number of required mean-field measurements scales logarithmically with the network size \( N \). 

\begin{corollary}\label{er_theorem}
Let \(\mathcal{G} = (G, f, h)\) be a network dynamical system of size \(N\) satisfying Assumptions \ref{isolated_ass} and \ref{assumption_about_coupling}, where \(G(N, p)\) is an Erd\H{o}s-Rényi random graph, and $N$ is sufficiently large. Let \(\phi_q \in \mathbb{R}^{P \times N}\) be full-spark measurement matrices for each \(q \in [N]\), with \(P(N)\) specified in each regime below. Assume that the mean-fields
\begin{equation}
    y^q(1) = \phi_q x^q(1)
\end{equation}
are known for all \(q \in [N]\). Fix $\epsilon \in (0,1)$.  
\begin{enumerate}
    \item (Supercritical)
    Suppose $p = \frac{\log N}{N}(1 - \epsilon)$.  
    If there is $c_1>0$ such that 
    \begin{equation}
        P(N) \ge c_1 \log N, \nonumber
    \end{equation}
    then asymptotically almost surely the networks in G(N,p) are uniquely topologically reconstructible from mean-field measurements.

    \item (Connected)
    Suppose $p = \frac{\log N}{N}(1 + \epsilon)$.
    If there is $c_1>0$ such that 
\begin{equation}
     P(N)  \ge c_1 \log^2 N, \nonumber
\end{equation}
    then asymptotically almost surely the networks in G(N,p) are uniquely topologically reconstructible from mean-field measurements.
\end{enumerate}
\end{corollary}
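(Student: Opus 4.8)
\textit{Proof proposal.} The plan is to reduce the corollary to a single high-probability statement about the maximum degree of $G(N,p)$ and then establish that statement by a tail bound on the binomial degree distribution. Since the graph is undirected, the out-degree of every vertex equals its degree, so $\Delta(G) = \max_{q\in[N]} d_q$ with $d_q \sim \mathrm{Bin}(N-1,p)$. By hypothesis each $\phi_q$ is full-spark (this is automatic for the Gaussian ensemble of Corollary \ref{first_corol}), so Theorem \ref{topologicalcondition} applies verbatim on the event $\{\,2\Delta(G)+1 < P(N)\,\}$. Hence it suffices to show that, in each regime, this event holds asymptotically almost surely; the corollary then follows because reconstructibility is guaranteed deterministically once $P>2\Delta(G)+1$.

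First I would set up the concentration estimate. Writing $\mu := (N-1)p$, I would bound the upper tail of a single degree by a Chernoff/Poisson large-deviation inequality of the form $\Pr[d_q \ge k] \le \exp\!\big(-\mu\,\psi(k/\mu)\big)$ with rate function $\psi(x) = x\log x - x + 1$, and then take a union bound over the $N$ vertices,
\begin{equation}
\Pr\big[\Delta(G) \ge k\big] \;\le\; N\,\Pr\big[d_1 \ge k\big] \;\le\; \exp\!\big(\log N - \mu\,\psi(k/\mu)\big).
\end{equation}
The task is to locate the smallest $k=k^\ast(N)$ at which this exponent diverges to $-\infty$, which yields $\Delta(G) < k^\ast$ a.a.s.

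Next I would specialize to the two regimes. In the supercritical case $\mu = (1-\epsilon)\log N\,(1+o(1))$, so with $k = x\mu$ the exponent is $\log N\big(1-(1-\epsilon)\psi(x)\big)(1+o(1))$; since $\psi$ is increasing for $x>1$, this is negative and of order $\log N$ once $x$ exceeds the unique solution of $(1-\epsilon)\psi(x)=1$, giving $k^\ast = \Theta(\log N)$ and hence an absolute constant $C$ with $\Delta(G) \le C\log N$ a.a.s. Choosing $c_1 > 2C$ then forces $c_1\log N > 2\Delta(G)+1$, proving part (1). In the connected case $\mu = (1+\epsilon)\log N\,(1+o(1))$, the identical computation (now with threshold $(1+\epsilon)\psi(x)=1$) still gives $\Delta(G) = \Theta(\log N)$ a.a.s.; since $\Theta(\log N) = o(\log^2 N)$, the stated hypothesis $P \ge c_1\log^2 N$ exceeds $2\Delta(G)+1$ with room to spare, proving part (2).

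The main obstacle is the tail estimate itself in the near-threshold window $\mu = \Theta(\log N)$: here the degree distribution is \emph{not} concentrated about its mean, and the maximum degree sits at a genuine constant multiple of $\mu$, so a Gaussian or sub-Gaussian surrogate for the upper tail is too weak. One must instead retain the full large-deviation rate $\psi$ and track constants carefully, so that the $\log N$ gain from the union bound is beaten by the $\mu\,\psi(k/\mu)$ loss at the correct multiple of $\log N$. A secondary, routine point is to verify that replacing $N-1$ by $N$ and inserting the $(1\pm\epsilon)$ factors perturbs $\mu$ only by a $1+o(1)$ factor, which does not affect the $\Theta(\log N)$ conclusion.
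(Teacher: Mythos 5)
Your proposal is correct, and at the top level it follows the same reduction as the paper: bound $\Delta(G)$ asymptotically almost surely, then invoke the deterministic wTRC guarantee of Theorem \ref{topologicalcondition} on the event $\{2\Delta(G)+1 < P(N)\}$. The difference lies in how the degree bound is obtained, and here your route is both more self-contained and more careful. The paper writes $\Delta(G) = Np + \xi(N)$, reduces the wTRC to the requirement $\xi(N) = o(P(N))$, and then cites known random-graph asymptotics: $\Delta(G) = \mathcal{O}(\log N/\log\log N)$ w.h.p. in the supercritical regime, and $\Delta(G) \le \mathbb{E}[d] + \mathcal{O}(\sqrt{Np\log N})$ in the connected regime. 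You instead prove the needed bound from scratch via the Chernoff rate function $\psi(x) = x\log x - x + 1$ plus a union bound, concluding $\Delta(G) = \Theta(\log N)$ a.a.s. in both regimes. This buys two things. First, your estimate is the right one for the window at hand: the $\mathcal{O}(\log N/\log\log N)$ asymptotics the paper cites is the max-degree behavior for bounded average degree $Np = \mathcal{O}(1)$, whereas for $Np = \Theta(\log N)$ the maximum degree sits at a constant multiple of $\log N$; your insistence on retaining the full large-deviation rate rather than a sub-Gaussian surrogate is exactly what makes the computation come out correctly there. Second, you keep constants explicit, which matters for part (1): since $\Delta(G)$ is genuinely of order $\log N$ in the supercritical regime, the conclusion holds only when $c_1$ is sufficiently large (your choice $c_1 > 2C$), and your proof makes this visible. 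The paper's argument checks only $\xi(N) = o(P(N))$ and never verifies that $P(N)$ also dominates the leading term $2Np = 2(1-\epsilon)\log N$ in the inequality $2Np + 2\xi(N) + 1 < P(N)$, so it implicitly suggests any $c_1>0$ works, which your (correct) $\Theta(\log N)$ bound shows cannot be the case; the statement must be read as asserting existence of a suitable $c_1$, which is precisely what you prove. Both arguments finish part (2) identically, since $\Theta(\log N) = o(\log^2 N)$ makes the hypothesis $P \ge c_1 \log^2 N$ more than sufficient for any $c_1 > 0$.
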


These cases demonstrate that the number of required mean-field measurements grows only logarithmically with the network size \( N \). For sparse and large-scale networks, a small fraction of measurements may suffice to ensure unique topological reconstruction with high probability, provided that the wTRC condition is satisfied.

\subsection{Strong Topological Reconstruction Condition (sTRC)}

In Section \ref{wtrc}, we showed the weak topological reconstruction. The maximum degree ensuring the topological reconstruction condition is deterministic and applies to mean-field measurements that are the full-spark. However, finding the solution to Eq. (\ref{P0})  is known to be NP-hard  \cite{natarajan1995sparse}. 
The sTRC pursues two main objectives: (1) reformulating the uniqueness condition into a computationally tractable framework, and (2) reducing the number of required mean-field measurements.
%
%

The restricted isometry property plays a key role:
\begin{definition}\label{RIPdef}
 The isometry constant \(\delta_s(\phi)\) is the smallest number such that 
\begin{equation}\label{RIP}
    (1-\delta_s(\phi)) \|x\|_2^2\leq\|\phi x\|_2^2 \leq (1+\delta_s(\phi))\|x\|_2^2
\end{equation}
    holds for all \(s\)-sparse vectors \(x\), \cite{candes2008restricted}. It is called \(\phi\) has restricted isometry property (RIP) when the inequality (\ref{RIP}) holds for given \(s\) and $\delta_s(\phi)\in (0,1).$ 
\end{definition}

Our main result is as follows:

\begin{theorem}\label{P1-main} (sTRC) Let $\mathcal{G}=(G,f,h)$ be a network dynamical system of size $N$ with maximum out-degree $\Delta(G)$, satisfying Assumptions \ref{isolated_ass} and \ref{assumption_about_coupling}. Let $\phi_q\in \mathbb{R}^{P\times N}$ be measurement matrices for each $q\in [N]$ such that $ \delta_{2(\Delta(G)+1)}(\phi_q)<\sqrt{2}-1$. Assume that mean-fields
\begin{equation}
    y^q(1)=\phi_q x^q(1)
\end{equation}        
 are known for all $q\in[N]$. Then, the network $\mathcal{G}$ is uniquely topologically reconstructible. That is, given $\{y^q(1)\}_{q=1}^N$, for each $q\in [N]$, there exists a unique solution $x_*^q(1)$ to 
\begin{equation}\label{P1} \min \|\tilde{x}^q(1)\|_1 \text{  subject to }  y^q(1) = \phi_q \tilde{x}^q(1),
\end{equation}
such that 
\begin{equation}
    x_*^q(1) =x^q(1).
\end{equation}
Moreover, there exists a one-to-one correspondence between $\supp(x^q(1))$ and $L_1(q)$.
\end{theorem}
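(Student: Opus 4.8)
The plan is to split the argument into two essentially independent pieces: a \emph{structural} (dynamical) part that identifies the vector $x^q(1)$ and shows that its support encodes the out-neighborhood $L_1(q)$ while being at most $(\Delta(G)+1)$-sparse, and a \emph{compressed-sensing} part that shows the convex program (\ref{P1}) recovers this sparse vector exactly under the stated RIP hypothesis. The structural part is shared with the proof of the wTRC (Theorem \ref{topologicalcondition}); only the recovery mechanism differs, so I would state the support computation once as the core lemma and then plug it into the RIP machinery.

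First I would compute $x^q(1)$ explicitly by applying the dynamics (\ref{eqdiffus}) to the $q$-pinching initial condition $x_i^q(0)=\epsilon_q\delta_{iq}$. Fix a coordinate $i\neq q$. Since $x_j^q(0)=\epsilon_q\delta_{jq}$ vanishes for $j\neq q$, and since $f_i(0)=0$ by Assumption \ref{isolated_ass}, every term of the coupling sum drops out except $j=q$, giving $x_i^q(1)=A_{iq}\,h_{iq}(0,\epsilon_q)$. By Assumption \ref{assumption_about_coupling} we have $h_{iq}(0,\epsilon_q)\neq 0$ because $\epsilon_q\in B_{\delta}(0)\setminus\{0\}$, so $x_i^q(1)\neq 0$ if and only if $A_{iq}=1$, i.e. if and only if $i\in L_1(q)$. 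This yields $\supp(x^q(1))\setminus\{q\}=L_1(q)$; since the pinched index $q$ is known, removing it when present gives the claimed one-to-one correspondence between $\supp(x^q(1))$ and $L_1(q)$, and in particular $\|x^q(1)\|_0\le |L_1(q)|+1\le \Delta(G)+1$.

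Next, with the sparsity level $s:=\Delta(G)+1$ established, I would invoke the RIP-based $\ell_1$ recovery guarantee. The hypothesis $\delta_{2(\Delta(G)+1)}(\phi_q)<\sqrt{2}-1$ is exactly the condition $\delta_{2s}(\phi_q)<\sqrt{2}-1$ of the standard recovery theorem \cite{candes2008restricted}, under which, for any $s$-sparse $x$, the program $\min\|\tilde{x}\|_1$ subject to $\phi_q\tilde{x}=\phi_q x$ has a unique minimizer equal to $x$ (the error bounds collapse to zero when the signal is genuinely $s$-sparse and the measurements are noiseless). Applying this with $x=x^q(1)$ gives $x_*^q(1)=x^q(1)$, so (\ref{P1}) returns the true state and hence its support. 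Repeating this for every $q\in[N]$ recovers each $L_1(q)$, and by Remark \ref{trc} the family $\{L_1(q)\}_{q=1}^N$ determines $A$ uniquely, completing the topological reconstruction.

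The main obstacle is not the compressed-sensing step, which is a direct citation, but the structural computation and bookkeeping around the diagonal entry $x_q^q(1)$. Because the self-interaction term $f_q(\epsilon_q)+\sum_{j}A_{qj}h_{qj}(\epsilon_q,0)$ need not vanish, the $q$-th coordinate may be nonzero; this is precisely why the correct sparsity level is $\Delta(G)+1$ rather than $\Delta(G)$, and why the RIP order in the hypothesis is $2(\Delta(G)+1)$. The delicate point is confirming that Assumption \ref{assumption_about_coupling} rules out accidental cancellation in the off-diagonal entries (so that the support is exactly $L_1(q)$ and not a strict subset), which is the hinge that makes the support-to-topology correspondence hold.
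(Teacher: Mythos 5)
Your proposal is correct and follows essentially the same route as the paper: the structural step (computing $x_i^q(1)=A_{iq}h_{iq}(0,\epsilon_q)$ for $i\neq q$, handling the possibly nonzero diagonal entry, and bounding $\|x^q(1)\|_0\le \Delta(G)+1$) is exactly the content of the paper's Lemmas \ref{pinch} and \ref{havingL1}, and the recovery step is the same citation of the RIP guarantee (Lemma \ref{noiseless}) with $s=\Delta(G)+1$, concluded via the support-to-adjacency correspondence of Remark \ref{trc} and Corollary \ref{corollary_reconstructable}. The cancellation concern you flag at the end is precisely what Assumption \ref{assumption_about_coupling} is designed to exclude, as in the paper's proof of Lemma \ref{pinch}.
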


The sTRC derived under exact (noiseless \(y^q(1) = \phi_q \tilde{x}^q(1)\)) conditions is robust under small perturbations \cite{donoho2006most2, candes2008restricted}, thereby supporting the practical relevance of the sTRC\footnote{We investigate this robustness by evaluating performance under varying the recovery thresholds.}.

\begin{remark}
Let $y^q(1)=\phi_q x^q(1) +z^q(1) $, where $z^q(1)$ is an unknown noise term satisfying $\|z^q(1)\|_2\leq \xi$. Suppose the measurement matrix $\phi_q$ satisfies the restricted isometry condition $\delta_{2(\Delta(G)+1)}(\phi_q)<\sqrt{2}-1$. Then, the solution $x_*^q(1)$ to:
\begin{equation}
    \min\|\tilde{x}^q(1)\|_1 \text{ subject to } \|y^q(1)-\phi_q \tilde{x}^q(1)\|_2\leq \xi,
\end{equation}
satisfies the stability bound:
\begin{equation}
    \|x^q(1)-x_*^q(1)\|_2\leq K\xi,
\end{equation}
for some positive constant $K$ \cite{candes2008restricted}. This implies that all feasible minimizers lie within an $\ell_2$ ball of radius $K\xi$ centered at the ground truth $x^q(1)$; in this sense, the solution is unique up to a precision threshold dictated by the noise level. Moreover, the one-to-one correspondence between $\supp(x^q(1))$ and $L_1(q)$ persists.
\end{remark}

Even though sTRC is deterministic, finding deterministic RIP-satisfying matrices remains an open problem~\cite{tao}. We focus our attention on the case where the measurement matrices are Gaussian random matrices. This case is relevant in many applications. The following remark together with Theorem \ref{P1-main} implies Corollary \ref{gaussian}. A formal proof of this corollary is omitted for brevity.

\begin{remark}
    E. Cand\`es et al. showed that  for Gaussian matrices whose entries are i.i.d. Gaussian with zero mean and the variance $1/P$, if
\begin{equation}
    s\leq \frac{c_1P}{\log(N/P)},
\end{equation}
where $c_1$ is a positive constant, $s$ is the sparsity level, then the solutions of the minimization problems in Theorems \ref{topologicalcondition} and \ref{P1-main} coincide (see Definition \ref{P0-P1-eq}) with overwhelming probability, as established in \cite{candes2006stable}.
\end{remark}

\begin{corollary}\label{gaussian}
Let $\mathcal{G}=(G,f,h)$ be a network dynamical system of size $N$ with maximum out-degree $\Delta(G)$, satisfying Assumptions \ref{isolated_ass} and \ref{assumption_about_coupling}. Let $\phi_q\in \mathbb{R}^{P\times N}$ be Gaussian random matrices whose entries are i.i.d. Gaussian with the mean zero and the variance $1/P$ where $P<N$, for each $q\in [N]$, such that $\Delta(G)+1 \leq  \frac{c_1P}{\log(N/P)}$ where $c_1>0$ is some positive constant. Assume that mean-fields
\begin{equation}
    y^q(1)=\phi_q x^q(1)
\end{equation}        
 are known for all $q\in[N]$. Then, the network $\mathcal{G}$ is uniquely topologically reconstructible with overwhelming probability via the minimization problem in equation \ref{P1}.
\end{corollary}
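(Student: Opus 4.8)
The plan is to read this corollary as the probabilistic instantiation of the deterministic Theorem~\ref{P1-main}: the single gap to close is that the Gaussian hypothesis on $\{\phi_q\}_{q=1}^N$ forces the restricted isometry condition $\delta_{2(\Delta(G)+1)}(\phi_q)<\sqrt2-1$ to hold \emph{simultaneously} for all $q\in[N]$ with overwhelming probability. Once that event is secured, Theorem~\ref{P1-main} applies verbatim to each pinching experiment and yields the conclusion, so no new analytic estimate is needed beyond the cited Gaussian concentration bound.

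First I would fix the sparsity level $s:=\Delta(G)+1$. As in the analysis underlying Theorem~\ref{P1-main}, Assumptions~\ref{isolated_ass} and~\ref{assumption_about_coupling} force $x^q(1)$ to be supported on $\{q\}\cup L_1(q)$, so $\|x^q(1)\|_0\le 1+d_q\le s$; this is exactly why the relevant isometry order is $2s=2(\Delta(G)+1)$. I would then invoke the Gaussian restricted isometry estimate of Cand\`es--Tao, which is the mechanism behind the equivalence recalled in the remark preceding the statement: for a single matrix $\phi_q$ with i.i.d.\ $\mathcal N(0,1/P)$ entries, the relation $s\le c_1 P/\log(N/P)$ guarantees $\delta_{2s}(\phi_q)<\sqrt2-1$ with probability at least $1-2e^{-\gamma P}$ for a constant $\gamma>0$. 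The hypothesis $\Delta(G)+1\le c_1 P/\log(N/P)$ is precisely this relation for $s$.

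The step demanding the most care --- and the main obstacle --- is upgrading this per-matrix guarantee to a simultaneous one over all $N$ matrices. I would apply a union bound, so that the probability that \emph{some} $\phi_q$ violates the RIP is at most $2N e^{-\gamma P}$, and then verify that this quantity still vanishes. Since $s\ge1$, the hypothesis forces $\log(N/P)\le c_1 P$, hence $P\gtrsim \log N$ and $e^{-\gamma P}\le N^{-\gamma/c_1}$, giving a failure probability of at most $2N^{1-\gamma/c_1}$. Choosing $c_1$ small enough that $\gamma/c_1>1$ (the latitude granted by ``some positive constant'') drives this to zero, which is the precise meaning of overwhelming probability here. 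The delicate point is thus the bookkeeping that ensures the exponential concentration in $P$ dominates the linear union-bound factor $N$; everything else is a citation or an application of the previous theorem.

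Finally, conditioning on the event that every $\phi_q$ satisfies $\delta_{2(\Delta(G)+1)}(\phi_q)<\sqrt2-1$, I would apply Theorem~\ref{P1-main} for each $q\in[N]$: the $(P1)$ minimizer $x_*^q(1)$ is unique and equals $x^q(1)$, and $\supp(x^q(1))$ is in one-to-one correspondence with $L_1(q)$. Collecting these supports and invoking Remark~\ref{trc} reconstructs the adjacency matrix $A$, establishing unique topological reconstruction via $(P1)$ minimization with overwhelming probability.
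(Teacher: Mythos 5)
Your proposal is correct and follows exactly the route the paper intends: the paper explicitly omits a formal proof, stating only that the remark on the Cand\`es--Tao Gaussian result together with Theorem~\ref{P1-main} implies the corollary, and your argument is precisely that implication written out (sparsity bound $\|x^q(1)\|_0\le\Delta(G)+1$ from the pinching lemmas, Gaussian RIP to feed Theorem~\ref{P1-main}, then Remark~\ref{trc} to assemble $A$).

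The one place you go beyond the paper is the union bound over the $N$ matrices $\{\phi_q\}_{q=1}^N$, and this is a genuine improvement in rigor: the paper's cited result is a per-matrix guarantee, and simultaneity is exactly the gap one must close to reconstruct every column of $A$ on a single high-probability event. Your bookkeeping there has a small slip: from $\log(N/P)\le c_1 P$ one gets $\log N\le c_1P+\log P$, hence $P\ge \log N/(c_1+1)$ exactly, or $P\ge(1-o(1))\log N/c_1$ asymptotically --- not $P\ge \log N/c_1$ outright --- so the failure exponent is $\gamma(1-o(1))/(c_1)$ rather than $\gamma/c_1$. This does not break the argument (choosing $c_1<\gamma/2$, say, still forces the exponent above $1$ for large $N$), but it is worth noting that the conclusion genuinely requires the freedom to shrink $c_1$ below the concentration constant $\gamma$; if $c_1$ were instead pinned to the constant in the Cand\`es--Tao theorem, the union bound by itself would not be guaranteed to vanish. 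Your reading of ``some positive constant'' as granting that latitude is the right one, and making it explicit strengthens the paper's omitted argument.
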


We establish bounds that allow one to determine the minimal number of mean-field measurements required, given the network's maximum degree.

\begin{corollary}
We summarize our findings as follows:
\begin{equation}
    \begin{split}
        &P > 2\Delta(G) + 1, \quad \text{(wTRC with full-spark matrices)};\\
        &\frac{c_1 P}{\log(N/P)} \geq \Delta(G) + 1, \quad \text{(sTRC w.h.p. with random Gaussian matrices)},
    \end{split}
\end{equation}
where $c_1$ is a positive constant, $P$ is the number of mean-field measurements, $N$ is the network size, and $\Delta(G)$ is the maximum out-degree of the underlying graph.
\end{corollary}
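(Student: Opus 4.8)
The plan is to treat this corollary as a consolidation of the two uniqueness guarantees already established, namely Theorem \ref{topologicalcondition} (the full-spark / $\ell_0$ route) and Corollary \ref{gaussian} (the Gaussian RIP / $\ell_1$ route). The single quantity that ties both inequalities to $\Delta(G)$ is the sparsity of the one-step pinched state $x^q(1)$, so the first task is to pin that down precisely; everything afterward is reading off the two prior results with this sparsity level inserted.

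First I would compute $\supp(x^q(1))$ under the $q$-pinching initial condition. Evaluating Eq.~(\ref{eqdiffus}) at $t=0$, where $x_j^q(0)=\epsilon_q\delta_{jq}$, and using Assumption \ref{isolated_ass} ($f_i(0)=0$) together with Assumption \ref{assumption_about_coupling} ($h_{ij}(0,v)\neq 0$ for $v\in B_\delta(0)\setminus\{0\}$ and $h_{ij}(0,0)=0$), I would show that for $i\neq q$ the entry $x_i^q(1)$ is nonzero exactly when $i\in L_1(q)$, while the node $q$ itself contributes at most one additional (always-present, hence uninformative) entry. Consequently $\|x^q(1)\|_0\le |L_1(q)|+1=d_q+1\le \Delta(G)+1$, and the off-diagonal support is in one-to-one correspondence with $L_1(q)$. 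I would then set $s:=\Delta(G)+1$ as the common sparsity parameter.

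Next I would read off each line. For the first inequality I invoke the spark criterion (Lemma \ref{spark_sparsest}): a unique sparsest solution to $y^q(1)=\phi_q\tilde{x}^q(1)$ exists whenever $2\|x^q(1)\|_0<\spark(\phi_q)$. Since each $\phi_q$ is full-spark, $\spark(\phi_q)=P+1$, so substituting $\|x^q(1)\|_0\le \Delta(G)+1$ gives $2(\Delta(G)+1)<P+1$, i.e.\ $P>2\Delta(G)+1$, which is precisely the hypothesis of Theorem \ref{topologicalcondition}. For the second inequality I invoke Corollary \ref{gaussian}: for i.i.d.\ Gaussian $\phi_q$ with variance $1/P$, the $(P_0)\sim(P_1)$ equivalence holds with overwhelming probability provided $s\le c_1 P/\log(N/P)$; substituting $s=\Delta(G)+1$ yields $\Delta(G)+1\le c_1 P/\log(N/P)$.

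I do not expect a genuine mathematical obstacle here, since the corollary is a bookkeeping summary of results proved earlier; the only care needed is to ensure that the same sparsity value $s=\Delta(G)+1$ coming from the pinched support is fed into both prior results, and to keep the attribution straight. In particular, the $P>2\Delta(G)+1$ bound belongs to the full-spark / $\ell_0$ (wTRC) setting of Theorem \ref{topologicalcondition}, whereas the $c_1P/\log(N/P)\ge \Delta(G)+1$ bound belongs to the Gaussian / $\ell_1$ (sTRC) setting of Corollary \ref{gaussian} — the opposite of the parenthetical labels as currently written. I would flag that apparent transposition of the \emph{wTRC}/\emph{sTRC} labels as the one thing to correct before finalizing.
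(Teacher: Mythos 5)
Your proposal is correct and follows essentially the same route as the paper: the corollary is a direct consolidation of Theorem \ref{topologicalcondition} (full-spark, $\ell_0$) and Corollary \ref{gaussian} (Gaussian RIP, $\ell_1$), both fed with the common sparsity bound $\|x^q(1)\|_0 \le \Delta(G)+1$ coming from Lemma \ref{havingL1}. Your flag about the transposed parenthetical labels is also well taken: the $P > 2\Delta(G)+1$ full-spark bound is the wTRC of Theorem \ref{topologicalcondition}, while the Gaussian bound $c_1 P/\log(N/P) \ge \Delta(G)+1$ belongs to the sTRC route of Corollary \ref{gaussian} --- a reading corroborated by the paper's own remark immediately following the corollary, which attributes the $c_1 P/\log(N/P)$ bound to the sTRC.
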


\begin{remark}
Analogous to Corollary~\ref{er_theorem}, we can obtain a sufficient condition for the Erd\H{o}s-Rényi graph \( G(N, p) \) when the mean-fields are generated with Gaussian random matrices as in Corollary \ref{gaussian}. In both the supercritical and connected regimes, the maximum degree \( \Delta(G) \) satisfies with high probability
\[
\Delta(G) < Np + \sqrt{2Np \log N}.
\]
Hence, the strong Topological Reconstruction Condition (sTRC) yields the bound:
\begin{equation}\label{P_theoretical}
Np + \sqrt{2Np \log N} + 1 < \frac{c_1P}{\log(N/P)}.
\end{equation}
This inequality provides a practical criterion for selecting the number of mean-field measurements \( P \) in terms of the network size \( N \) and edge probability \( p \). 
\end{remark}

\section{Full Network Reconstruction from Mean-Fields}\label{full-rec-sect}
We propose a two-stage method to reconstruct the network \( \mathcal{G} = (G, f, h) \) from mean-field observations. 
First, 
we uniquely recover the full state trajectory \( x^q(t) \).
%
Second, we infer the local dynamics at each vertex by fitting a linear combination of features derived from the recovered state trajectory. These features, referred to as the oracle dictionary, encode the exact functional form of each node’s evolution and will be introduced below. Although this stage resembles a model selection problem, it is constrained to operate on approximations of states recovered from mean-field observations.

\begin{definition}
Let \( x(t) \in \mathbb{R}^N \) denote the system state at time \( t \). The oracle dictionary for vertex \( i \in [N] \) is a set of spanning functions \( \{ \psi_i^l \}_{l=1}^{s_i} \), such that the local dynamics at node \( i \) are exactly described by
\begin{equation}
x_i(t+1) = \sum_{l=1}^{s_i} c_{i,l} \, \psi_i^l(x(t)),
\end{equation}
where \( c_{i,l} \in \mathbb{R} \) are nonzero coefficients. That is, the oracle dictionary provides an exact functional representation of the evolution of \( x_i \) in terms of the current system state. Let \( s_{\max} := \max_{i \in [N]} s_i \) denote the maximum number of terms across all vertices.
\end{definition}

By construction, the oracle dictionary guarantees that each $f$ and $h$ can be expressed exactly as a linear combination of the spanning functions in the dictionary. 

\begin{assumption}\label{ass:oracle_rank}
For each vertex \( i \in [N] \), define the dictionary matrix \( \Psi_i \in \mathbb{R}^{N s_{\max} \times s_i} \) as
\begin{equation}
\Psi_i := 
\begin{bmatrix}
\psi_i^1(x^1(0)) & \cdots & \psi_i^{s_i}(x^1(0)) \\
\psi_i^1(x^1(1)) & \cdots & \psi_i^{s_i}(x^1(1)) \\
\vdots & \ddots & \vdots \\
\psi_i^1(x^q(t)) & \cdots & \psi_i^{s_i}(x^q(t)) \\
\vdots & \ddots & \vdots \\
\psi_i^1(x^N(s_{\max}{-}1)) & \cdots & \psi_i^{s_i}(x^N(s_{\max}{-}1)) \\
\end{bmatrix},
\end{equation}
where each row corresponds to a specific pair \( (q, t) \), with \( q \in [N] \) indexing the perturbed initial conditions and \( t \in \{0, \dots, s_{\max} - 1\} \) indexing time.

We assume that this matrix has full column rank, i.e.,
\begin{equation} 
\mathrm{rank}(\Psi_i) = s_i,
\end{equation}
which ensures that the dictionary functions \( \{ \psi_i^l \}_{l=1}^{s_i} \) are linearly independent when evaluated across all trajectories and time steps.
\end{assumption}

\begin{theorem}\label{two-stage}
Let \( \mathcal{G} = (G, f, h) \) be a network dynamical system of size \( N \), with maximum out-degree \( \Delta(G) \), satisfying Assumptions~\ref{isolated_ass} and~\ref{assumption_about_coupling}.
Assume that for each node \( i \in [N] \), a set of basis functions \( \{ \psi_i^l \}_{l=1}^{s_i} \) is given which exactly represent the local dynamics, and that Assumption~\ref{ass:oracle_rank} holds. Let \( \phi_q \in \mathbb{R}^{P \times N} \) be measurement matrices for each \( q \in [N] \), satisfying
\begin{equation}
\delta_{2([\Delta(G)+1]^{s_{\max}})}(\phi_q) < \sqrt{2} - 1.
\end{equation}
Let the number of time steps be \( T = s_{\max} \), and assume access to mean-field measurements
\begin{equation}
\{ y^q(t) = \phi_q x^q(t) \}_{t = 1}^{s_{\max}}, \quad \text{for each } q \in [N].
\end{equation}
Then for each \( q \in [N] \), the state trajectory \( \{ x^q(t) \}_{t = 1}^{s_{\max}} \) is uniquely recoverable via the sequence of convex optimizations:
\begin{equation}\label{P11}
\min_{\tilde{x}^q(t)} \| \tilde{x}^q(t) \|_1 
\quad \text{subject to } \phi_q \tilde{x}^q(t) = y^q(t), 
\quad \text{for } t = 1, \dots, s_{\max}.
\end{equation}

Moreover:
\begin{enumerate}
    \item[(i)] The network topology is exactly reconstructible from \( \ \{supp(x^q(1))\}_{q=1}^N \), via the correspondence between \( L_1(q) \) and \( \supp(x^q(1)) \).
    \item[(ii)] The exact local dynamics at each vertex \( i \in [N] \) are recovered by solving 
    \begin{equation}
    x_i^{(q)}(1,\cdots,  s_{\max}) = \Psi_i \, c_i,
    \end{equation}
    where \( \Psi_i \in \mathbb{R}^{N s_{\max} \times s_i} \) is the oracle dictionary matrix, and the unique coefficients satisfy
    \begin{equation}
    \hat{c}_i = \Psi_i^\dagger x_i^{(q)}(1,\cdots,  s_{\max}), 
    \end{equation}
    where $ \Psi_i^\dagger$  denotes the Moore–Penrose pseudoinverse of \( \Psi_i \).
\end{enumerate}
\end{theorem}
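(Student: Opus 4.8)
The plan is to follow the two-stage structure the statement already lays out: first use sparse recovery, applied time-slice by time-slice, to reconstruct the full trajectory $\{x^q(t)\}_{t=1}^{s_{\max}}$ from the mean-fields, and then solve one linear least-squares problem per node to extract the coefficients $c_i$. The conceptual core — and the step that explains why the RIP order is precisely $2([\Delta(G)+1]^{s_{\max}})$ — is a bound on how fast the support of $x^q(t)$ can grow under the iteration (\ref{eqdiffus}). I would first establish the support-propagation inclusion
\[
\supp(x^q(t)) \subseteq \supp(x^q(t-1)) \cup \bigcup_{j \in \supp(x^q(t-1))} L_1(j).
\]
This follows directly from Assumptions \ref{isolated_ass} and \ref{assumption_about_coupling}: if $x_i^q(t-1)=0$ and every in-neighbour $j$ of $i$ (those with $A_{ij}=1$) also satisfies $x_j^q(t-1)=0$, then $f_i(0)=0$ and $h_{ij}(0,0)=0$ force $x_i^q(t)=0$, so a node can become active only through itself or an active in-neighbour. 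Since each node has at most $\Delta(G)$ out-neighbours, $|\supp(x^q(t))| \le (1+\Delta(G))\,|\supp(x^q(t-1))|$, and since $|\supp(x^q(0))|=1$ an induction yields $\|x^q(t)\|_0 \le (\Delta(G)+1)^{t} \le (\Delta(G)+1)^{s_{\max}}$ for every $1\le t\le s_{\max}$.

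With this uniform sparsity bound, Stage 1 is a repeated invocation of the machinery behind Theorem \ref{P1-main}. For each fixed $q$ and each $t\le s_{\max}$, the vector $x^q(t)$ is $(\Delta(G)+1)^{s_{\max}}$-sparse; using the hypothesis $\delta_{2([\Delta(G)+1]^{s_{\max}})}(\phi_q)<\sqrt2-1$ together with the monotonicity of the isometry constants in the sparsity level (so the same bound controls every smaller order $2(\Delta(G)+1)^t$), the Cand\`es RIP recovery theorem \cite{candes2008restricted} guarantees that the convex program (\ref{P11}) has $x^q(t)$ as its unique minimiser. This establishes unique recoverability of the whole trajectory. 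Part (i) is then immediate: having recovered $x^q(1)$ for every $q$, the one-to-one correspondence between $\supp(x^q(1))$ and $L_1(q)$ proved in Theorem \ref{P1-main} reconstructs each out-neighbourhood, and Remark \ref{trc} assembles these columns into the adjacency matrix $A$.

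For Stage 2 (part (ii)) I would fix a node $i$ and stack the oracle identity $x_i^q(t+1)=\sum_{l=1}^{s_i} c_{i,l}\,\psi_i^l(x^q(t))$ over all pairs $(q,t)$ with $q\in[N]$ and $t\in\{0,\dots,s_{\max}-1\}$. Because the entire trajectory is now known from Stage 1, every entry of the dictionary matrix $\Psi_i$ and of the target $x_i^{(q)}(1,\cdots,s_{\max})$ is determined, and the stacked identity reads $x_i^{(q)}(1,\cdots,s_{\max})=\Psi_i c_i$. Assumption \ref{ass:oracle_rank} guarantees that $\Psi_i$ has full column rank, so $\Psi_i^\top\Psi_i$ is invertible and $c_i$ is the unique least-squares solution $\hat c_i = \Psi_i^\dagger x_i^{(q)}(1,\cdots,s_{\max}) = c_i$, completing the recovery of the local dynamics $(f,h)$.

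The main obstacle is the sparsity-growth estimate of the first step: one must check that the resting-state assumptions genuinely prevent the support from escaping the iterated out-neighbourhoods, and that the resulting geometric bound $(\Delta(G)+1)^{s_{\max}}$ is exactly the sparsity level certified by the stated isometry constant. Everything downstream — the per-time $\ell_1$ recovery via \cite{candes2008restricted} and the per-node pseudoinverse via Assumption \ref{ass:oracle_rank} — becomes routine once this bound, and its matching to the RIP order $2([\Delta(G)+1]^{s_{\max}})$, is in place.
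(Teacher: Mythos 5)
Your proposal is correct and follows essentially the same route as the paper's proof: a geometric support-growth bound $\|x^q(t)\|_0 \le (\Delta(G)+1)^t$ established by induction, per-time-slice $\ell_1$ recovery via the Cand\`es RIP lemma, topology from the $\supp(x^q(1))\leftrightarrow L_1(q)$ correspondence, and unique coefficients from the full-column-rank pseudoinverse under Assumption~\ref{ass:oracle_rank}. If anything, your explicit support-propagation inclusion $\supp(x^q(t)) \subseteq \supp(x^q(t-1)) \cup \bigcup_{j \in \supp(x^q(t-1))} L_1(j)$ makes rigorous the multiplicative growth step that the paper only asserts in one line before invoking induction.
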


We note that the assumption of access to an exact oracle dictionary is idealized. However, since we recover the level sets $L_1(q)$ uniquely, we can pinpoint the contribution of each term in the basis when the coupling $h$ and dynamics $f$ is provided.

\section{Numerical Experiments} \label{numerics-sec}

We present two topological reconstruction experiments to illustrate the Strong Topological Reconstruction Condition (sTRC). Afterwards, we present a full network reconstruction experiment.
To evaluate reconstruction performance, we use two metrics: the Matthews Correlation Coefficient (MCC) and the Mean Squared Error (MSE), defined below.

\begin{definition}
The \emph{Matthews Correlation Coefficient} (MCC) is a measure of the quality of binary classifications. It is defined as:

\begin{equation}
\mathrm{MCC} = \frac{ TP \cdot TN - FP \cdot FN }{ \sqrt{ (TP + FP)(TP + FN)(TN + FP)(TN + FN) } }
\end{equation}
where \( TP \), \( TN \), \( FP \), and \( FN \) denote the number of true positives, true negatives, false positives, and false negatives, respectively. The MCC ranges from \(-1\) to \(+1\), where \(+1\) indicates perfect prediction, \(0\) indicates performance no better than random guessing, and \(-1\) indicates total disagreement between prediction and ground truth.
\end{definition}

We also consider the following:

\begin{definition}
The \emph{Mean Squared Error} (MSE) quantifies the average squared difference between estimated values and ground truth values. It is defined as:

\begin{equation}
\mathrm{MSE} = \frac{1}{n} \sum_{i=1}^{n} (y_i - \hat{y}_i)^2
\end{equation}

where \( y_i \) denotes the true value, \( \hat{y}_i \) the predicted value, and \( n \) the number of samples. A lower MSE indicates more accurate predictions.
\end{definition}

\subsection{Topological Reconstruction From Mean-Field Measurements}

Our procedure consists of three stages: (1) generation of network states, (2) computation of mean-field measurements, and (3) solution of the inverse problem. 
The goal is to reconstruct the underlying network topology based on the mean-field measurements.

To evaluate performance, we apply this procedure across a range of graph sparsities and reconstruction thresholds, and compare the supports of the reconstructed vectors to the ground truth using the Matthews correlation coefficient (MCC). This provides a quantitative measure of reconstruction accuracy and robustness under varying conditions. The implementation details of the methodology are described below.

Using the setup of Section~\ref{setting-sec} our steps are:

\begin{enumerate}[label=(\roman*)]
    \item Each isolated dynamics $f_i : \mathbb{R} \rightarrow \mathbb{R}$ is chosen from the quadratic family:
    \begin{equation}
        f_i(x_i(t)) = r_i x_i(t)(1 - x_i(t)), \quad \forall i \in [N],
    \end{equation}
    where each $r_i$ is sampled uniformly at random from the set $\{1.2, 2.6, 3, 3.8\}$.
    
    \item The coupling function $h_{ij}: \mathbb{R} \times \mathbb{R} \rightarrow \mathbb{R}$ is given by
    \begin{equation}
        h_{ij}(x_i(t), x_j(t)) = \alpha_{ij} (x_i(t) - x_j(t)), \quad \forall i, j \in [N],
    \end{equation}
    where the matrix $\alpha = [\alpha_{ij}]$ is symmetric, and its upper triangular entries are sampled uniformly at random from $[0,1)$.
    
\item We generate undirected Erd\H{o}s-Rényi (ER) random graphs
$G_i(N, p_i)$ with $N = 1000$, where the edge probabilities $p_i$ are specified below.

    \item For each $G_i$, we generate  $\{x^q(1)\}_{q=1}^N$ by $q$-pinching the state $x^q(0)$, where perturbation size $\epsilon_q$ is sampled uniformly at random from $[0.5, 1)$.
    
    \item We generate $y^q(1) = \phi x^q(1)$, where $\phi$ is a $P \times N$ Gaussian random matrix with entries i.i.d. from $\mathcal{N}(0, \frac{1}{P})$, used uniformly across all $q \in [N]$.

    \item We solve Eq. (\ref{P1}) to reconstruct the topology.

    \item A threshold is applied to the reconstructed vectors obtained from Eq. (\ref{P1}), setting entries below the threshold to zero to isolate significant components.
    
    \item We extract the support set of each state vector and concatenate them. We concatenate the support sets obtained from Eq. (\ref{P1}).
    
\item After reconstruction, we use the Matthews Correlation Coefficient (MCC) between the true and reconstructed support sets.

\item Define \( P_c \) as the smallest value of \( P \) for which the MCC score exceeds 0.99.

\end{enumerate}
\subsubsection{Experiment 1: Supercritical Regime}

We investigate the transition behavior of the critical number of mean-field measurements $P_c$ required for successful topological reconstruction in the supercritical Erd\H{o}s-Rényi regime. We determine \( P_c \) by using a support recovery threshold of \(10^{-9}\). Our objective is to observe how the transition from reconstruction failure to success unfolds.

\begin{figure}[h!]
    \centering
    \includegraphics[width=1\textwidth]{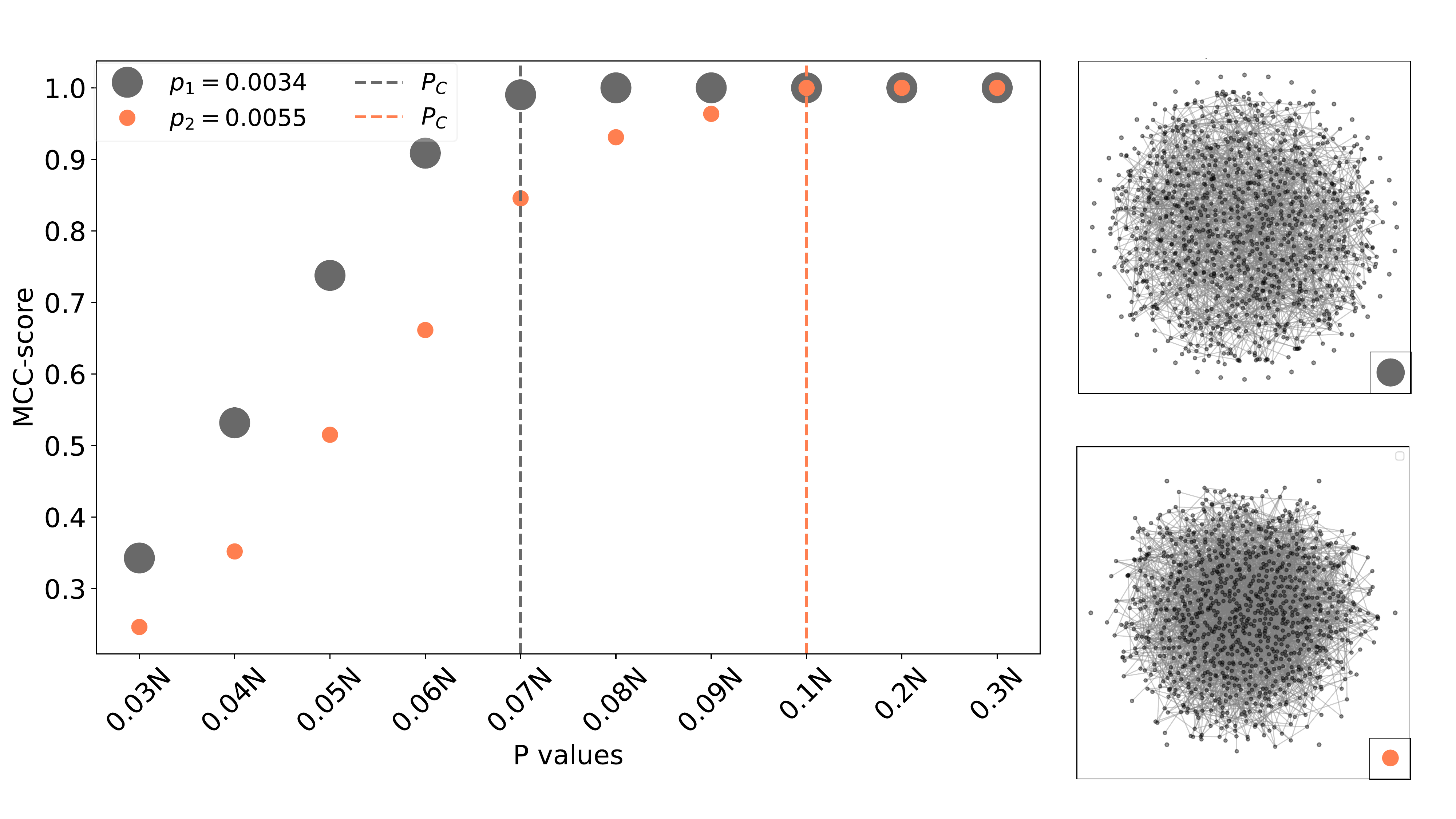}
\caption{
{\bf Critical number of mean-fields for the supercritical ER graphs}. ER graphs are generated with \( p_i = \frac{\log N}{N}(1 - \varepsilon_i) \), where \( \varepsilon_1 = 0.5 \) (in black) and \( \varepsilon_2 = 0.2 \) (in orange). 
The right insets depic the network topologies. In the left inset, we show the MCC-score and define $P_c$ as the critical number of mean fields that excedees $99\%$ of the score. For each case, the empirical critical number of mean-field measurements \( P_c \) is approximately 7\% and 10\% of the network size \( N \), respectively. 
}
\end{figure}

The fact that the required number of measurements \( P \) decreases as \( \varepsilon \) increases provides empirical support for the asymptotic behavior described in Corollary~\ref{er_theorem}. Furthermore, repeating the experiment with sine-function coupling and the same settings ($\varepsilon_2 = 0.2$, $N = 1000$, precision $10^{-10}$) shows that the results remain unchanged, with the transition occurring at the same point.
In the next experiment, we investigate this relationship, examining how the critical number of measurements \( P_c \) varies as a function of \( \varepsilon \) across a range of sparsity levels.

\subsubsection{Experiment 2: Supercritical to Connected Regime}
We investigate the monotonic behavior of the critical number of mean-field measurements required for successful topological reconstruction as we transition between the supercritical and connected Erd\H{o}s-Rényi regimes. We consider graphs where \( p = \frac{\log N}{N}(1 \pm \varepsilon) \) for \( \varepsilon \in (0,1) \), representing slightly sub- and super-connected regimes. We empirically determine the minimal number of mean-field measurements, denoted \( P_c \), at which reconstruction becomes exact, using two support recovery thresholds: \( 10^{-9} \) and \( 10^{-10} \). Our objectives are twofold: (i) to examine how \( P_c \) varies monotonically with the sparsity of the ER graphs, and (ii) to assess the robustness of the reconstruction method under different noise thresholds.

\begin{figure}[h!]
    \centering
    \includegraphics[width=0.8\textwidth]{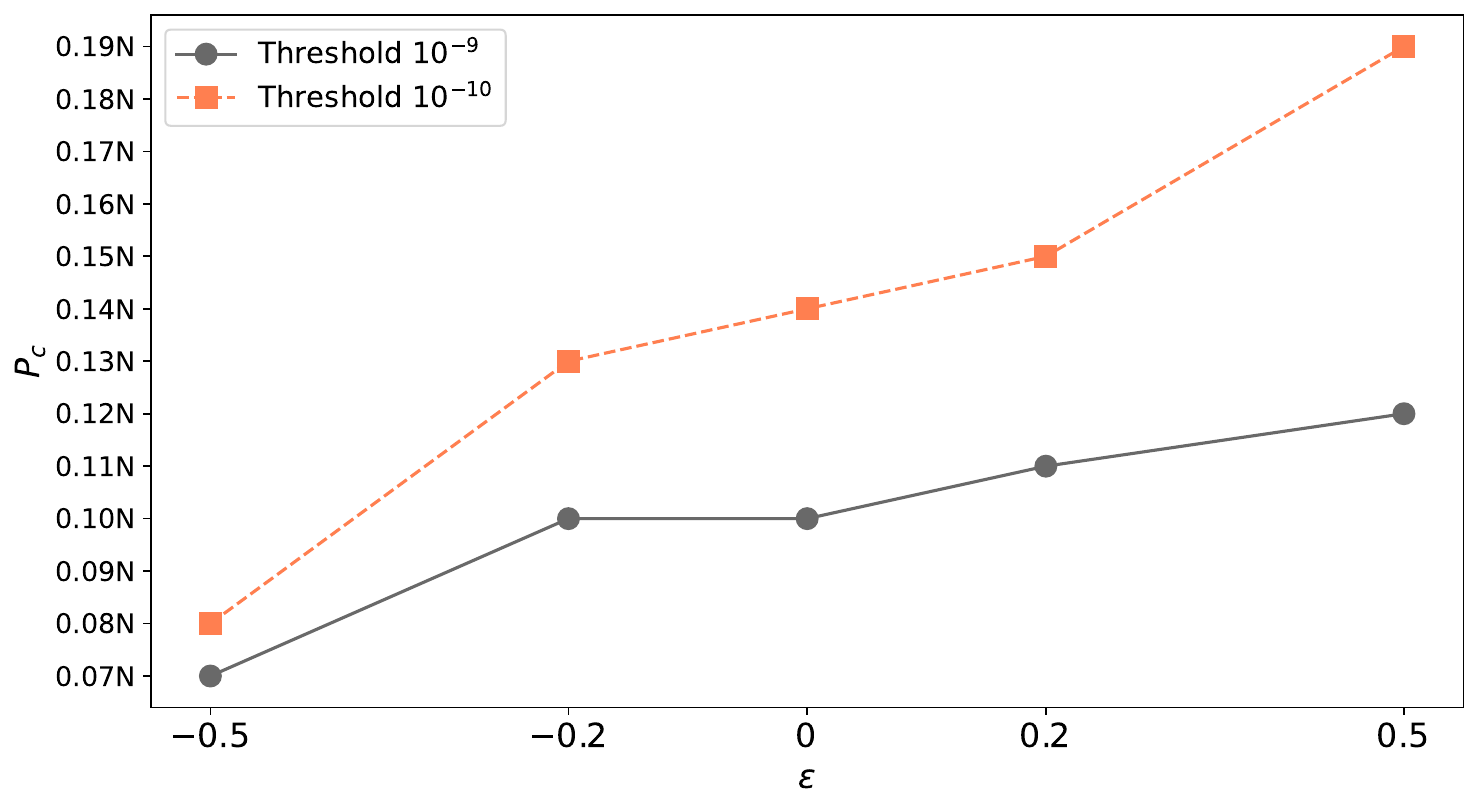}
\caption{{\bf Critical number of mean-fields across ER regimes}. We plot the empirical critical number of mean-field measurements \( P_c \) required for exact reconstruction, as a function of the sparsity parameter \( \varepsilon \), where \( p = \frac{\log N}{N}(1 + \varepsilon) \)  for a support recovery threshold of \( 10^{-9} \) and \( 10^{-10} \). 
This suggests that operating beyond the stricter \( P_c \) thresholds ensures robustness: if recovery is successful under a tighter tolerance (e.g., \(10^{-10}\)), it remains successful under looser ones (e.g., \(10^{-9}\)).
}
\end{figure}
The thresholds \( P_c \) reflect  the feasibility of exact support recovery and its robustness under relaxed accuracy requirements. 
A higher threshold defines a robust operating point where recovery remains accurate. Such robustness is valuable in practical scenarios where measurement noise, numerical error, or tolerance levels may vary.





\subsection{Full Network Reconstruction From Mean-Field Meaurements}

Our procedure consists of four main stages: (i) generation of network states, (ii) computation of mean-field measurements, (iii) solution of the inverse problem, and (iv) recovery of the underlying dynamics and topology from the reconstructed states. 
The goal is to reconstruct the underlying network \( \mathcal{G} = (G, f, h) \) from mean-field measurements. We perform the experiment with the linear map as isolated dynamics, random parameters, and a nondegenerate zero fixed point. This setting is sufficiently rich, since nonlinear perturbations that admit linearization can be locally approximated by the linear map. Using the setup of Section~\ref{setting-sec}, our steps are:

\begin{enumerate}[label=(\roman*)]
    \item Each isolated dynamic \( f_i : \mathbb{R} \rightarrow \mathbb{R} \) is defined as:
    \[
        f_i(x_i(t)) = r_i x_i(t), \quad \forall i \in [N],
    \]
    where each \( r_i \) is sampled uniformly at random from \( \{1.2, 2.6, 3, 3.8\} \).
    
    \item The coupling function \( h_{ij}: \mathbb{R} \times \mathbb{R} \rightarrow \mathbb{R} \) is given by:
    \[
        h_{ij}(x_i(t), x_j(t)) = \alpha_{ij} (x_j(t) - x_i(t)), \quad \forall i, j \in [N],
    \]
    where the matrix \( \alpha = [\alpha_{ij}] \) is symmetric, with its upper triangular entries sampled uniformly at random from \( [0,1) \).
    
    \item We generate an undirected Erd\H{o}s-Rényi (ER) random graph \( G(N, p) \) with \( N = 100 \), where the edge probability \( p \) is specified below.
    
    \item We generate the states \( \{\{x^q(t)\}_{q=1}^N\}_{t=1}^T \) by \( q \)-pinching initial state \( x^q(0) \), where the perturbation size \( \epsilon_q \) is sampled uniformly at random from \( [0.5, 1) \). 
    
    \item We fix the number of mean-field measurements at \( P = 0.6N \). For each \( t \in [T] \) and \( q \in [N] \), we generate:
    \[
        y^q(t) = \phi x^q(t),
    \]
    where \( \phi \) is a \( P \times N \) Gaussian random matrix with i.i.d. entries from \( \mathcal{N}(0, \frac{1}{P}) \), shared across all \( q \in [N] \).
    
    \item We solve Eq. (\ref{P1})  to reconstruct the states.

    \item A threshold is applied to the reconstructed vectors obtained from Eq. (\ref{P1}), setting entries below the threshold to zero to isolate significant components.
    
    \item We extract the support set of each state vector and concatenate them. We also concatenate the support sets obtained from Eq. (\ref{P1}).
    
    \item After reconstruction, we use the Matthews Correlation Coefficient (MCC) between the true and reconstructed support sets.
    
    \item The reconstructed trajectories \( \{\{\hat{x}^q(t)\}_{q=1}^N\}_{t=1}^T \)  capture the time evolution of each perturbed system under compressed sensing recovery.

\item For each \( t \in [T-1] \), we perform sparse regression over a linear polynomial feature library using the reconstructed subtrajectory \( \{\hat{x}^q(t')\}_{t'=0}^{t+1} \) for each \( q \in [N] \), thereby identifying the dynamics governing the temporal evolution from the trajectory data.

\end{enumerate}

\subsubsection{Experiment 3: Full Network Reconstruction}

We investigate the performance of the reconstruction using a threshold of \(10^{-9}\). We examine both the stability and success of the approach.
\begin{figure}[h!]
    \centering
    \includegraphics[width=1\textwidth]{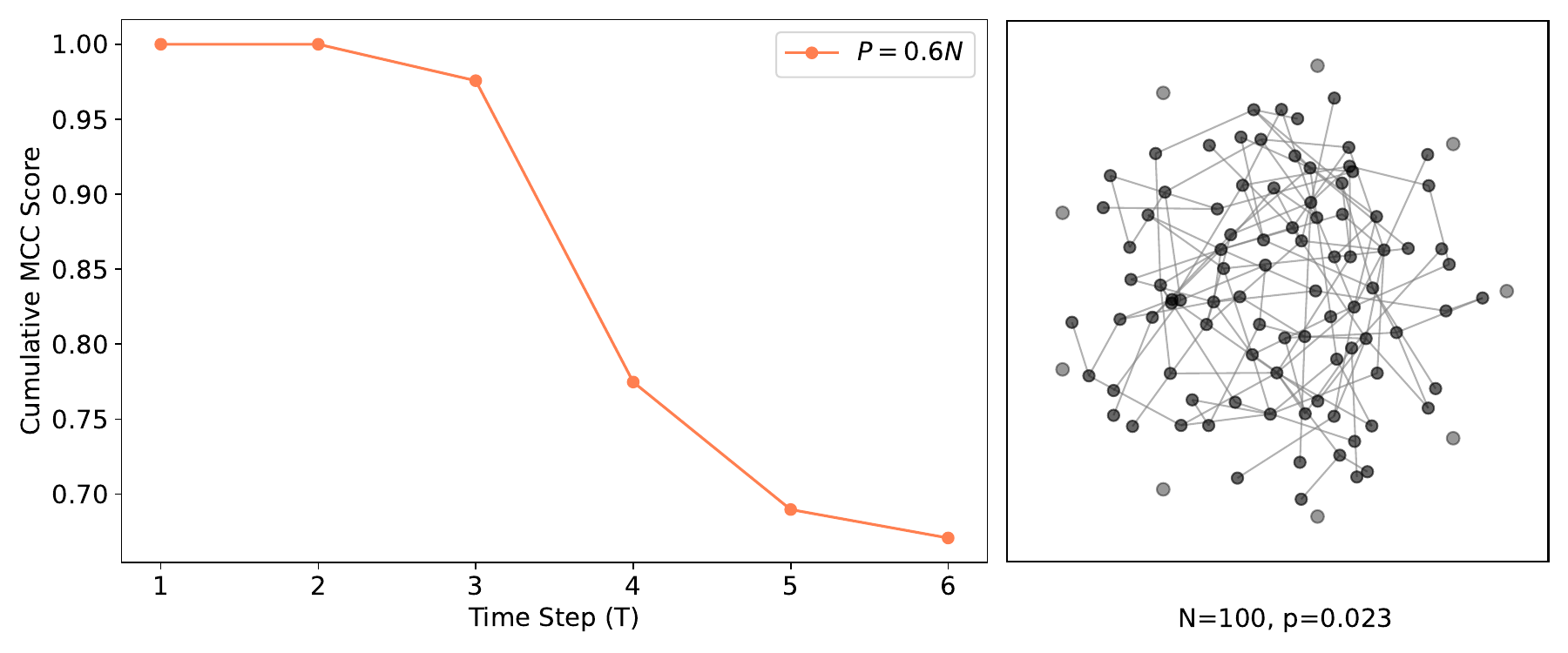}
    \caption{{\bf Cumulative MCC Score}. The ER graph on the right is generated with the given edge probability \( p \). The number of mean-field measurements is set to 60\% of the network size \( N \). The cumulative MCC score, computed from the reconstructed trajectories \( \{\{\hat{x}^q(t)\}_{q=1}^N\}_{t=1}^T \) up to each time step \( T \), is shown on the left.}
\end{figure}
\begin{figure}[h!]
    \centering
    \includegraphics[width=1\textwidth]{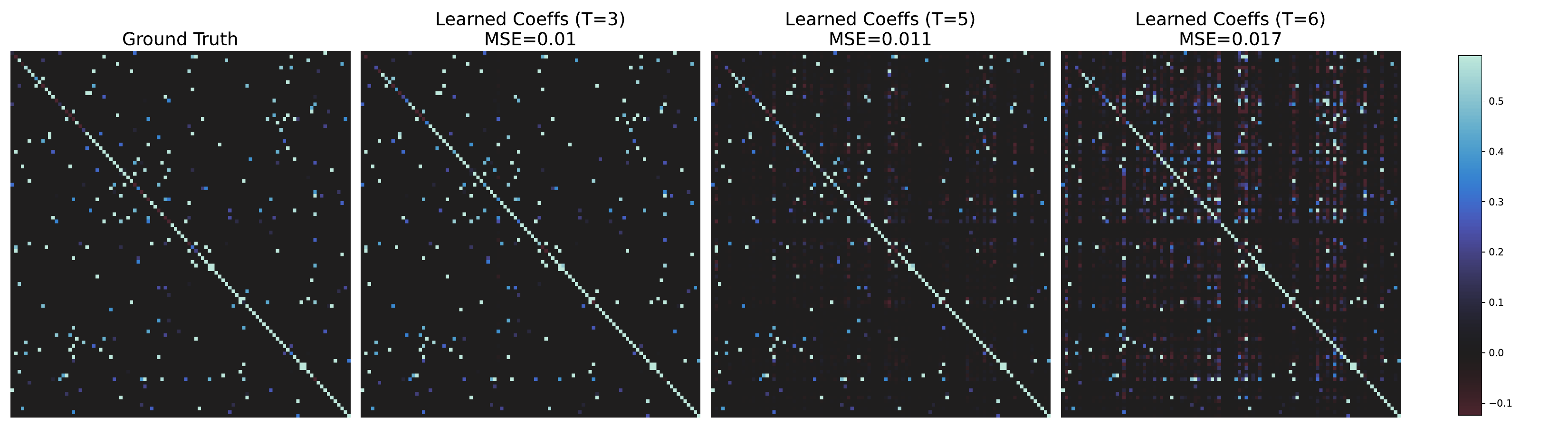}
    \caption{{\bf Heatmap for coefficient reconstruction}. The number of mean-field measurements is set to 60\% of the network size \( N \). The reconstructed trajectories \( \{\{\hat{x}^q(t)\}_{q=1}^N\}_{t=1}^T \), accumulated up to each time step \( T \), are used to generate the coefficient recovery. Each subplot corresponds to a different \( T \), with the respective MSE values shown above. The results demonstrate that recovery remains stable up to \( T = 5 \), consistent with Theorem~\ref{two-stage}.}\label{heatmap}
\end{figure}
When recovering coefficients from the reconstructed states, the twofold approach does not compromise stability, provided that the cumulative MCC score—which accounts for the reconstructed trajectories up to time \( T \)—does not drop significantly. The learned coefficients exhibit low MSE values up to \( T = 6 \). Reconstruction remains stable through \( T = 5 \), achieving an MSE of 0.011. At \( T = 3 \), the recovered coefficients yield an MSE of 0.01, as shown in Figure~\ref{heatmap}. These results align with Theorem~\ref{two-stage}. 
 We used only one-term isolated dynamics, and we would expect that as the number of active terms increases, accurate recovery would require either lower graph sparsity or a higher number of mean-field measurements.

\section{Conclusions}
We explored the reconstruction of hidden networks from mean-field measurements. A key limitation of our approach is the large number of required initializations, which reflects a fundamental trade-off in data acquisition: direct measurement of each individual vertex in large-scale systems is often infeasible. There are, however, potential strategies to reduce the number of pinchings. In the case of undirected graphs, the number of pinched nodes can be significantly reduced depending on the graph’s structure. An immediate reduction arises from the symmetry of the adjacency matrix. Furthermore, considering a second time iteration can lead to additional reductions. Notably, if probabilistic assumptions are permitted, the number of required pinchings can be substantially decreased. Nonetheless, determining the minimal number of initializations necessary for reliable reconstruction remains an open problem and a promising direction for future research.

Our framework  extends to cases where mean-field measurements are taken from an induced subgraph. This extension relies on the structural constraint that the measured subgraph has no incoming connections from the remainder of the network, only outgoing ones. As these outgoing interconnections do not alter the support of the measured states, they do not interfere with the reconstruction process.

\section*{Acknowledgments}
 We thank Edmilson Roque and Zheng Bian for enlightening discussions. This work was supported by the Serrapilheira Institute (Grant No.Serra-1709-16124), CNPq grant 312287/2021-6, and EPSRC-FAPESP Grant No. 2023/13706. Research carried out using the computational resources of the Center for Mathematical Sciences Applied to Industry (CeMEAI) funded by FAPESP (grant 2013/07375-0). The authors used ChatGPT (OpenAI, GPT-4, 2025) to assist with language editing. The code used to reproduce all numerical experiments in this paper is openly available at \cite{narci25trc}.

\section{Proofs} \label{proof-sec}
\subsection{Preliminaries}\label{prelim}
\begin{definition}
The support of the state vector $x \in \mathbb{R}^N $ is 
\begin{equation}
    \supp(x):=\{i:x_i\neq 0\}, 
\end{equation}
and the size of the support set is called $\ell_0$ norm (by abuse of notation), 
\begin{equation}
    \|x\|_0=\# \{i:x_i\neq 0\}.
\end{equation}
Moreover, a vector \(x\) is called \(s\)-sparse if $\|x\|_0\leq s$ for an integer \(s\).
\end{definition}

Given observations
\begin{equation}
    y = \phi x,
\end{equation}
where $y\in \mathbb{R}^P$, and $\phi\in \mathbb{R}^{P\times N}$ are known, our goal is to reconstruct $x\in \mathbb{R}^N$.

To define the unique reconstructability, we introduce the following optimization problem:

\begin{definition}\label{uniquely_reconstructable}
The following optimization problem
\begin{equation}
    x_* = \argmin_{\tilde{x} \in \mathbb{R}^N}  \|\tilde{x}\|_0 \quad \text{subject to}\quad \phi \tilde{x}=y,
\end{equation}
is called the $(P0)$ problem. 
The set of all solutions is denoted by $X_*$. A  vector $x$ is said to be $(P0)$ uniquely reconstructible if  $X_* = \{x\}$.
\end{definition} 
\begin{definition} 
We define $\ell_p$ norm of \(x\in \mathbb{R}^N\) as
\begin{equation}
    \|x\|_{p}= \big( \sum_{i=1}^N |x_i|^p\big)^{\frac{1}{p}}, \quad  1\leq p<\infty.
\end{equation}
\end{definition}

\begin{definition}
The following optimization problem
\begin{equation}
    \begin{split} &\min \|\tilde{x}\|_1 \\
&\text{subject to } \phi \tilde{x}=y, \end{split}
\end{equation}
is called the $(P1)$ problem and its solution is denoted by $x_*$.
\end{definition}

\subsection{Preparatory Results}

The Theorem \ref{topologicalcondition} and Theorem \ref{P1-main} will be obtained as a sequence of observations.
We use the notation $x^q(1)$ to represent the iterated state vector of the $q$-pinching initial condition $x^q(0)$. We denote the mean-fields as $y^q(1)$, where $y^q(1) = \phi_q x^q(1)$, and $\phi_q$ is the measurement matrix. 
   \begin{lemma}
    \label{pinch} {
$x^q_i(1)\neq 0$ when $i\neq q$ if and only if $i\in L_1(q)$.}
\end{lemma}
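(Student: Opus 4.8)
The plan is to prove the lemma by directly evaluating a single step of the dynamics from the $q$-pinching initial condition. Fix $i \neq q$. Since the initial state satisfies $x_j^q(0) = \epsilon_q \delta_{jq}$, we have $x_i^q(0) = 0$, so Assumption~\ref{isolated_ass} kills the isolated term: $f_i(x_i^q(0)) = f_i(0) = 0$. The update rule then reduces to the coupling sum
\begin{equation*}
x_i^q(1) = \sum_{j=1}^N A_{ij}\, h_{ij}\!\left(0, x_j^q(0)\right).
\end{equation*}
Because $x_j^q(0) = 0$ for every $j \neq q$, the identity $h_{ij}(0,0)=0$ from Assumption~\ref{assumption_about_coupling} annihilates every term of the sum except the one with $j = q$, leaving
\begin{equation*}
x_i^q(1) = A_{iq}\, h_{iq}\!\left(0, \epsilon_q\right).
\end{equation*}

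With this single surviving term, I would close both implications. If $i \in L_1(q)$, then by Remark~\ref{trc} we have $A_{iq} = 1$; since $\epsilon_q \in B_\delta(0)\setminus\{0\}$, the first part of Assumption~\ref{assumption_about_coupling} gives $h_{iq}(0,\epsilon_q) \neq 0$, hence $x_i^q(1) = h_{iq}(0,\epsilon_q) \neq 0$. Conversely, if $i \notin L_1(q)$ then $A_{iq} = 0$, so $x_i^q(1) = 0$. This establishes the equivalence $x_i^q(1) \neq 0 \iff i \in L_1(q)$ for $i \neq q$.

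There is no substantial obstacle here; the argument is essentially bookkeeping of which terms survive one iteration. The only point requiring care is that both halves of Assumption~\ref{assumption_about_coupling} are genuinely needed and must not be conflated: the vanishing $h_{ij}(0,0)=0$ is what removes the contributions of the non-pinched, inactive neighbors $j \neq q$, while the non-vanishing $h_{iq}(0,v)\neq 0$ on $B_\delta(0)\setminus\{0\}$ is precisely what guarantees the surviving $j=q$ term is nonzero — and this is where the hypotheses $\epsilon_q \neq 0$ and $\epsilon_q \in B_\delta(0)$ are used. One should also note that the restriction $i \neq q$ is essential, since for $i = q$ the isolated term $f_q(\epsilon_q)$ need not vanish and the equivalence is not claimed.
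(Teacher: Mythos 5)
Your proof is correct and follows essentially the same route as the paper's: evaluate one iteration from the $q$-pinching initial condition, use Assumption~\ref{isolated_ass} and $h_{ij}(0,0)=0$ to reduce the update to the single term $A_{iq}h_{iq}(0,\epsilon_q)$, then read off both implications from $A_{iq}$ and the non-vanishing of $h_{iq}(0,\epsilon_q)$. The only (cosmetic) difference is that you derive the surviving-term identity once and use it for both directions, while the paper repeats the computation in the forward direction and handles the converse by contraposition.
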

\begin{proof}
 First, we show that $i\in L_1(q)$ implies $x^q_i(1)\neq 0$.
Let $i \in L_1(q)$, then
\begin{equation}\begin{split}
    x^q_i(1)&= f_i(x^q_i(0)) + \sum_{j=1}^N A_{ij}h_{ij}(x^q_i(0), x^q_j(0))\\
    &=f_i(\delta_{iq}\epsilon_q)+ \sum_{j=1}^N A_{ij}h_{ij}(\delta_{iq}\epsilon_q, \delta_{jq}\epsilon_q), \; 
    \\
    &= 0+ A_{iq}h_{iq}(0,\epsilon_q),\\ 
    &= A_{iq}h_{iq}(0,\epsilon_q)\\
    & \neq 0. \nonumber
\end{split}    
\end{equation}
Observe that, the assumption $i \in L_1(q)$ means $A_{iq} \neq 0$ by Remark \ref{trc}, and $h_{iq}(0,\epsilon_q) \neq 0$  by Assumption \ref{assumption_about_coupling}. Thus, $x^q_i(1) \neq 0.$ For the other side of the implication, we want to show $x^q_i(1)\neq 0$ implies $i\in L_1(q)$. We use contraposition. Assume $i \notin  L_1(q).$ Then $A_{iq}=0$ and by the evolution law $x_i^q(1)=0.$ 
\end{proof}

\begin{lemma}\label{theta1}
    Let $D = \{ A \subseteq [N] \mid q \in A \}$ be the set of all subsets of $[N]$ that contain the element $q$, and let $C = \mathcal{P}([N] \setminus \{q\})$ be the power set of $[N] \setminus \{q\}$. Define  $\Theta_1^q : D \to C$ by
    \begin{equation}
        \Theta_1^q(A) = A \setminus \{q\}.
    \end{equation}
    Then, $\Theta_1^q$ is a one-to-one correspondence.
\end{lemma}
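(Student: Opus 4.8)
The plan is to prove that $\Theta_1^q$ is a bijection by constructing an explicit two-sided inverse, which is the cleanest way to establish a one-to-one correspondence. The map $\Theta_1^q : D \to C$ sends a set $A$ containing $q$ to $A \setminus \{q\}$, so the natural candidate for the inverse is the map $\Gamma : C \to D$ defined by $\Gamma(B) = B \cup \{q\}$. First I would verify that $\Gamma$ is well-defined, i.e., that $\Gamma(B) \in D$ for every $B \in C$: since $B \subseteq [N] \setminus \{q\}$, adjoining $q$ gives a subset of $[N]$ that contains $q$, so $B \cup \{q\} \in D$.

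Next I would check the two composition identities. For $\Gamma \circ \Theta_1^q = \mathrm{id}_D$, take any $A \in D$, so $q \in A$; then
\begin{equation}
\Gamma(\Theta_1^q(A)) = (A \setminus \{q\}) \cup \{q\} = A,
\end{equation}
where the last equality uses exactly the hypothesis $q \in A$. For $\Theta_1^q \circ \Gamma = \mathrm{id}_C$, take any $B \in C$, so $q \notin B$; then
\begin{equation}
\Theta_1^q(\Gamma(B)) = (B \cup \{q\}) \setminus \{q\} = B,
\end{equation}
where the last equality uses $q \notin B$. Having both identities establishes that $\Theta_1^q$ is a bijection with inverse $\Gamma$.

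Each of these set-theoretic identities is routine and can be dispatched by a one-line element-chase or the two displayed equations above. I do not anticipate a genuine obstacle here; the only point requiring mild care is ensuring both domain conditions ($q \in A$ on one side, $q \notin B$ on the other) are invoked at precisely the right step, since these membership constraints are exactly what make the set operations cancel. If one wanted to avoid exhibiting an inverse, the alternative is to verify injectivity and surjectivity directly, but constructing $\Gamma$ is shorter and simultaneously identifies the inverse, which is likely useful for the downstream correspondence between $\supp(x^q(1))$ and $L_1(q)$ in Theorem~\ref{topologicalcondition}.
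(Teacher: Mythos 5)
Your proof is correct, and it is essentially the paper's argument in different packaging: the paper verifies injectivity ($q \in A, q \in B$ and $A\setminus\{q\} = B\setminus\{q\}$ force $A = B$) and surjectivity (via the witness $A = B \cup \{q\}$) directly, whereas you bundle the same map $B \mapsto B \cup \{q\}$ into an explicit two-sided inverse. The set-theoretic content is identical, so there is nothing to flag.
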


\begin{proof}
    We first show that $\Theta_1^q$ is injective. Suppose that for some $A, B \in D$, we have 
    \[\Theta_1^q(A) = \Theta_1^q(B).\]
    By the definition of $\Theta_1^q$, this means 
    \[ A \setminus \{q\} = B \setminus \{q\}. \]
    Since both $A$ and $B$ belong to $D$, they contain $q$, so we can conclude that $A = B$. Thus, $\Theta_1^q$ is injective.

    Next, we prove that $\Theta_1^q$ is surjective. Let $B \in C$, meaning $B$ is a subset of $[N] \setminus \{q\}$. Define $A = B \cup \{q\}$. Clearly, $A$ contains $q$, so $A \in D$, and we have
    \[ \Theta_1^q(A) = A \setminus \{q\} = B. \]
    Since $B$ was an arbitrary element of $C$, this shows that $\Theta_1^q$ is surjective.

    Since $\Theta_1^q$ is both injective and surjective, it is bijective (one-to-one correspondence).
\end{proof}

\begin{lemma}\label{havingL1} Let $x^q(1)$ be given for known $q \in [N]$. Then, there exists a one-to-one correspondence between $\supp(x^q(1))$ and $L_1(q)$:
    \begin{itemize}
    \item If \( x^q_q(1) \neq 0 \), the correspondence is given by the bijection $ \Theta_1^q: D\rightarrow C$ where $D=\{A\subseteq [N] \mid q\in A\}$ and  $C= \mathcal{P}([N]\setminus\{q\})$.
    \item If \( x^q_q(1) = 0 \), the correspondence is given by the identity map $ \mathrm{id}: \mathcal{P}([N]\setminus\{q\}) \to  \mathcal{P}([N]\setminus\{q\})$.
\end{itemize}
Furthermore, $|\supp(x^q(1))|\leq d_q+1$ where $d_q $ is the out-degree of the vertex $q$ of the underlying graph.
\end{lemma}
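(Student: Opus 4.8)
The plan is to prove Lemma \ref{havingL1} by leveraging the two preceding lemmas and handling the two cases indicated in the statement. The core observation is that Lemma \ref{pinch} establishes, for $i \neq q$, the equivalence $x_i^q(1) \neq 0 \iff i \in L_1(q)$. This means that off the diagonal index $q$, the support of $x^q(1)$ coincides exactly with $L_1(q)$. The only ambiguity is whether the index $q$ itself belongs to the support, since $x_q^q(1) = f_q(\epsilon_q) + A_{qq} h_{qq}(\dots)$, and because the graph has no self-loops ($A_{qq}=0$), this reduces to $f_q(\epsilon_q)$, whose vanishing is not controlled by Assumption \ref{isolated_ass} (which only gives $f_q(0)=0$). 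Hence the two cases in the statement correspond precisely to whether $f_q(\epsilon_q)$ vanishes or not.

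First I would record the identity $\supp(x^q(1)) \setminus \{q\} = L_1(q)$, which follows directly from Lemma \ref{pinch}: for every $i \neq q$, membership of $i$ in the support is equivalent to $i \in L_1(q)$, and since the graph has no self-loops, $q \notin L_1(q)$. Next I would split into the two cases. In the case $x_q^q(1) \neq 0$, the support is $\supp(x^q(1)) = L_1(q) \cup \{q\}$, which contains $q$ and therefore lies in the domain $D = \{A \subseteq [N] \mid q \in A\}$ of Lemma \ref{theta1}; applying the bijection $\Theta_1^q$ sends it to $\Theta_1^q(\supp(x^q(1))) = (L_1(q) \cup \{q\}) \setminus \{q\} = L_1(q)$, establishing the one-to-one correspondence via Lemma \ref{theta1}. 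In the case $x_q^q(1) = 0$, we have $q \notin \supp(x^q(1))$, so $\supp(x^q(1)) = L_1(q) \subseteq [N] \setminus \{q\}$, and the identity map on $\mathcal{P}([N]\setminus\{q\})$ trivially realizes the correspondence.

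Finally, for the cardinality bound, I would observe that $|L_1(q)| = d_q$ by definition of the out-degree (the first-level set collects exactly the $d_q$ vertices receiving edges from $q$). In either case $\supp(x^q(1)) \subseteq L_1(q) \cup \{q\}$, so $|\supp(x^q(1))| \leq |L_1(q)| + 1 = d_q + 1$, with equality precisely when $x_q^q(1) \neq 0$.

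I do not expect a genuine obstacle here, as the lemma is essentially a bookkeeping synthesis of the two prior lemmas. The only point requiring mild care is being explicit that the diagonal term $q$ is the sole source of discrepancy between $\supp(x^q(1))$ and $L_1(q)$, and that its inclusion is governed entirely by whether $f_q(\epsilon_q) = 0$; this is why the correspondence map must be chosen case-dependently (the nontrivial bijection $\Theta_1^q$ when $q$ is present, the identity otherwise) rather than being a single uniform map.
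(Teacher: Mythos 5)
Your proposal is correct and follows essentially the same route as the paper's own proof: the same two-case split on whether $x_q^q(1)$ vanishes, the same use of Lemma \ref{pinch} to identify the off-diagonal support with $L_1(q)$, the bijection $\Theta_1^q$ of Lemma \ref{theta1} in the first case and the identity map in the second, and the same cardinality bound $|\supp(x^q(1))| \leq |L_1(q)| + 1 = d_q + 1$. Your added remark that the inclusion of $q$ in the support is governed entirely by $f_q(\epsilon_q)$ (since $A_{qq}=0$) is a correct clarification the paper leaves implicit, but it does not change the argument.
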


\begin{proof}
We have the state vector $x^q(1)$ for known $q\in[N]$.
We consider two  cases:

(a) The case \( x^q_q(1) \neq 0 \):

By Lemma \ref{pinch}, if $i \neq q$ and $x_i^q(1) \neq 0$, then $i \in L_1(q)$. By assumption, $x^q_q(1) \neq 0$, so we conclude that
\begin{equation}
    \supp(x^q(1)) = L_1(q) \cup \{q\}.
\end{equation}
Since $\supp(x^q(1)) \in D$ and $L_1(q) \in C$, where  $D=\{A\subseteq [N] \mid q\in A\}$ and  $C= \mathcal{P}([N]\setminus\{q\})$, the map $ \Theta_1^q: D\rightarrow C$ from Lemma (\ref{theta1}) is well-defined.

Applying \( \Theta_1^q \) to both sides, we compute:
\begin{equation}
\begin{split}
    \Theta_1^q (\supp(x^q(1)))
    &= (L_1(q)\cup \{q\})\setminus \{q\} \\
    &= L_1(q).
    \end{split}
\end{equation}
Since \( \Theta_1^q \) is a bijection, it establishes a one-to-one correspondence between $\supp(x^q(1))$ and $L_1(q)$ in this case.

(b) The case \( x^q_q(1) = 0 \):

In this case, $ q \notin \supp(x^q(1)) $, so we directly have
\begin{equation}
    \supp(x^q(1)) = L_1(q).
\end{equation}
Here, the identity map $ \mathrm{id}: \mathcal{P}([N]\setminus\{q\}) \to \mathcal{P}([N]\setminus\{q\})$ is a bijection.

Thus, for every $q$, we establish a one-to-one correspondence between $\supp(x^q(1))$ and $L_1(q)$, either via the bijection $\Theta_1^q$ in Case (a) or the identity map in Case (b). Furthermore, in both cases,  taking the cardinality of both sides gives
\begin{equation}
|\supp(x^q(1))|\leq |L_1(q)|+1.
\end{equation}
Since $|L_1(q)|=d_q$, by definition, we have
\begin{equation}
    |\supp(x^q(1))|\leq d_q+1.
\end{equation}
\end{proof}
Notably, there is no single bijection that applies to all $x^q(1)$ simultaneously. Instead, the reconstruction process is case-dependent.

\begin{corollary}    \label{corollary_reconstructable}
    If the state vector $x^q(1)$ is uniquely $(P0)$ reconstructible, then $L_1(q)$ can be uniquely determined via $(P0)$ problem. If $x^q(1)$ is uniquely $(P0)$ reconstructible for every $q\in [N]$,then the network $\mathcal{G}$ is topologically reconstructible. 
\end{corollary}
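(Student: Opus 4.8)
The plan is to prove Corollary \ref{corollary_reconstructable} as a direct assembly of the preparatory lemmas, treating it as a bridge from the abstract unique-reconstructability hypothesis to the concrete goal of recovering the adjacency matrix. The statement has two parts, and I would handle them sequentially, with the second part following from the first applied uniformly over all $q$.

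\textbf{First part.} Suppose $x^q(1)$ is uniquely $(P0)$ reconstructible for the given $q$. By Definition \ref{uniquely_reconstructable}, this means the $(P0)$ problem with data $y^q(1)=\phi_q x^q(1)$ has solution set $X_*=\{x^q(1)\}$; that is, solving the $\ell_0$ minimization returns the true state vector and nothing else. Once $x^q(1)$ is in hand, I invoke Lemma \ref{havingL1}, which guarantees a one-to-one correspondence between $\supp(x^q(1))$ and $L_1(q)$. Concretely, I would first read off $\supp(x^q(1))$ from the recovered vector, then apply the appropriate bijection from Lemma \ref{havingL1}: if $x^q_q(1)\neq 0$ I apply $\Theta_1^q$ (removing $q$ from the support), and if $x^q_q(1)=0$ the support already equals $L_1(q)$ via the identity map. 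Either way $L_1(q)$ is uniquely determined, since the bijection is explicit and the recovered support is unique.

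\textbf{Second part.} Now assume the hypothesis holds for every $q\in[N]$. By the first part, each $L_1(q)$ is uniquely determined. I then appeal to Remark \ref{trc}, which records that the family $\{L_1(q)\}_{q=1}^N$ uniquely determines the adjacency matrix $A$ through the rule $A_{ij}=1$ iff $i\in L_1(j)$. Since recovering $A$ is precisely the definition of topological reconstruction, the network $\mathcal{G}$ is topologically reconstructible. The logical flow is: unique $(P0)$ recovery of each state $\Rightarrow$ unique recovery of each support $\Rightarrow$ unique recovery of each $L_1(q)$ $\Rightarrow$ unique recovery of $A$.

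\textbf{Main obstacle.} This corollary is essentially bookkeeping, so there is no deep obstacle; the real content lives in the lemmas it cites. The one subtlety worth flagging is the case-dependence in Lemma \ref{havingL1}, already noted in the remark following it: the bijection used to pass from $\supp(x^q(1))$ to $L_1(q)$ depends on whether $x^q_q(1)$ vanishes, and there is no single bijection valid for all $q$ at once. I would make sure the proof does not implicitly assume a uniform correspondence, but instead determines, for each recovered $x^q(1)$ individually, which of the two cases of Lemma \ref{havingL1} applies by inspecting the $q$th coordinate. Since that coordinate is directly observable in the recovered vector, this case distinction is effective and introduces no genuine difficulty.
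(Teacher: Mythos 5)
Your proposal is correct and follows essentially the same route as the paper's proof: Lemma \ref{havingL1} yields unique determination of $L_1(q)$ from the uniquely recovered $x^q(1)$, and Remark \ref{trc} then assembles $\{L_1(q)\}_{q=1}^N$ into the adjacency matrix $A$. Your explicit handling of the case distinction in Lemma \ref{havingL1} (checking whether $x^q_q(1)$ vanishes) is a welcome clarification, but it does not change the argument, which the paper states more tersely.
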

\begin{proof}
    This result follows from Lemma \ref{havingL1}. If the state vector \(x^q(1)\) is $(P0)$ uniquely reconstructible for a given $q\in [N]$, we can determine $L_1(q)$. Since \(\{L_1(q)\}_{q=1}^N\) uniquely determines the adjacency matrix \(A\) of the underlying graph of the network $\mathcal{G}$ (see Remark \ref{trc}), it follows that if the state vector \(x^q(1)\) is $(P0)$ uniquely reconstructible for every $q\in [N]$, then $\mathcal{G}$ is topologically reconstructible from mean-fields.
\end{proof}

\subsection{Proof of Theorem \ref{topologicalcondition}}
First, we need the following:
 \begin{lemma} \label{spark_sparsest}
    If the solution $x_*$ to $(P0)$ problem satisfies $\|x_*\|_0< \frac{\spark(\phi)}{2}$, then the solution is unique, and $x_*=x$.
    \end{lemma}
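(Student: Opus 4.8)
The plan is to prove Lemma \ref{spark_sparsest} by contradiction, exploiting the definition of spark as the size of the smallest linearly dependent set of columns. Suppose $x_*$ solves the $(P0)$ problem with $\|x_*\|_0 < \frac{\spark(\phi)}{2}$, and suppose for contradiction that there is another solution $x' \neq x_*$ with $\phi x' = \phi x_* = \phi x$ and $\|x'\|_0 \le \|x_*\|_0$ (the latter because $x_*$ is a minimizer of the $\ell_0$ norm, so any competing solution has at least as small a support, and if uniqueness fails there is a distinct minimizer with equal $\ell_0$ norm). The natural object to study is the difference $v := x_* - x'$, which is nonzero since $x_* \neq x'$.

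The key step is to observe that $\phi v = \phi x_* - \phi x' = 0$, so $v$ is a nonzero vector in the kernel of $\phi$. By the subadditivity of the support, $\|v\|_0 \le \|x_*\|_0 + \|x'\|_0 \le 2\|x_*\|_0 < \spark(\phi)$. But by definition $\spark(\phi) = \min\{\|w\|_0 : \phi w = 0,\ w \neq 0\}$, so every nonzero vector in the kernel has at least $\spark(\phi)$ nonzero entries. This contradicts $0 < \|v\|_0 < \spark(\phi)$. Hence no such distinct minimizer $x'$ exists, and the $(P0)$ solution is unique, i.e. $X_* = \{x_*\}$.

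It then remains to identify $x_*$ with the original generating vector $x$. Since $x$ itself satisfies the constraint $\phi x = \phi x$ trivially, $x$ is feasible for the $(P0)$ problem; and because $x_*$ is the unique minimizer, if $\|x\|_0 < \frac{\spark(\phi)}{2}$ as well then $x$ must coincide with $x_*$. More directly, the hypothesis as stated concerns the minimizer $x_*$, and the uniqueness argument shows the feasible set contains exactly one sparsest point; since $x$ lies in that feasible set and shares the sparsity bound, we conclude $x_* = x$.

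I expect the only real subtlety to be the bookkeeping around the bound $2\|x_*\|_0 < \spark(\phi)$ versus the strict-versus-nonstrict inequalities: the statement uses a strict inequality $\|x_*\|_0 < \frac{\spark(\phi)}{2}$, which gives exactly $\|v\|_0 < \spark(\phi)$ and hence the clean contradiction, so no boundary case arises. The argument is otherwise entirely elementary, relying only on linearity of $\phi$, the definition of spark, and $\|u+w\|_0 \le \|u\|_0 + \|w\|_0$; there is no genuine obstacle, and the main care needed is simply to justify that a failure of uniqueness produces a \emph{distinct} minimizer of equal or smaller support to feed into the difference vector.
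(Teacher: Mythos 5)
Your proposal is correct and takes essentially the same approach as the paper: both argue by contradiction that the difference of two distinct solutions is a nonzero null-space vector whose $\ell_0$ norm is at most $2\|x_*\|_0 < \spark(\phi)$, contradicting the definition of spark. You additionally spell out the identification $x_*=x$, which the paper's proof leaves implicit, and you correctly flag that this step genuinely requires the sparsity bound $\|x\|_0 < \spark(\phi)/2$ on the generating vector itself (as indeed holds in the paper's applications, where the bound is verified for $x^q(1)$).
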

    \begin{proof}
    By contradiction, let  $x_1, x_2$ be distinct solutions. Then, $h=x_1-x_2$ is in the  null-space of $\phi$, meaning $\|h\|_0\geq \spark(\phi)$. Since $\|x_1\|_0, \|x_2\|_0< \frac{\spark(\phi)}{2}$, we get $\|h\|_0<\spark(\phi)$. 
\end{proof}

\noindent
{\bf Proof of Theorem \ref{topologicalcondition}}
The mean-field measurements are given by
\begin{equation}
    y^q(1) = \phi_q x^q(1),
\end{equation}
for each $q\in [N]$.

First, by Lemma \ref{spark_sparsest}, the optimization problem,
\begin{equation}
     \min\|\tilde{x}^q(1)\|_0 \text{  subject to } y^q(1) = \phi_q \tilde{x}^q(1),
\end{equation}
admits a unique solution $x^q_*(1)$ if
\begin{equation}
    \|x^q(1)\|_0<\frac{\spark(\phi_q)}{2}.
\end{equation}

Next, we apply the assumption that the measurement matrices $\{\phi_q(1)\}_{q=1}^N$ are full-spark $\spark(\phi_q)=P+1$. Also, by assumption, $P>2\Delta(G)+1$. Thus,
\begin{equation}
    \spark(\phi_q)> 2(\Delta(G)+1), \text{ for all }q\in [N].
\end{equation}
Then, the sufficient condition for $(P0)$ unique reconstructibility simplifies to 
\begin{equation}\label{reco_simple}
    \|x^q(1)\|_0<\Delta(G)+1, \text{ for all }q\in [N].
\end{equation}

Since Lemma \ref{havingL1} establishes that $\|x^q(1)\|_0\leq d_q+1$ for any $q\in [N]$, and by definition, the maximum out-degree of the network is
\begin{equation}
    \Delta(G) = \max_{q\in[N]}d_q,
\end{equation}
it follows that
\begin{equation}
    |\supp(x^q(1))|\leq \Delta(G)+1, \text{ for all }q\in [N].
\end{equation}
Consequently, by Corollary \ref{corollary_reconstructable}, the network $\mathcal{G}$ is topologically reconstructible.  $\square$

   \subsection{Proof of Corollary \ref{first_corol}}

A matrix \(\phi\) is full-spark if and only if every set of $P$ columns of $\phi$ is linearly independent, which is equivalent to every $P\times P $ submatrix of $\phi$ being invertible.
     
The set of matrices with $P$ linearly dependent columns (i.e., matrices where the determinant is zero) forms a lower-dimensional subset in the space of all matrices. Since the determinant is a continuous function of the matrix entries, the set of matrices where the determinant is zero has Lebesgue measure zero in the space of all possible matrices. Furthermore, a finite union of these determinant-zero sets remains a measure zero set.
     
Gaussian measure is absolutely continuous with respect to Lebesgue measure. This implies that the probability of selecting a matrix from any set of measure zero (such as the set where the determinant is zero) is zero. Therefore, the union of the  $\det^{-1}(0)$ sets, which corresponds to matrices with linearly dependent columns, has Gaussian measure zero.

Thus, with probability one, each $\phi_q$ is full-spark, and the corollary follows.  $\square$

\subsection{Proof of Corollary \ref{er_theorem}}

To formalize the scaling relationships used in the proof of Corollary \ref{er_theorem}, we introduce the following asymptotic notation:

\begin{definition}
Let \( f(P) \) and \( g(P) \) be functions from the integers to the positive real numbers. We define their asymptotic relations as follows:

\begin{itemize}
    \item \( f(P) = \mathcal{O}(g(P)) \) if there exist constants \( C > 0 \) and \( P^0 \in \mathbb{N} \) such that \( |f(P)| \leq C g(P) \) for all \( P > P^0 \).
    
    \item \( f(P) = \Theta(g(P)) \) if there exist constants \( C_1, C_2 > 0 \) and \( P^0 \in \mathbb{N} \) such that \( C_1 g(P) \leq |f(P)| \leq C_2 g(P) \) for all \( P > P^0 \).
    
\item \( f(P) = \Omega(g(P)) \) if there exist constants \( C_1 > 0 \) and \( P^0 \in \mathbb{N} \) such that \( |f(P)| \geq C_1 g(P) \) for all \( P > P^0 \).

    \item \( f(P) = o(g(P)) \) if for every \( \varepsilon > 0 \), there exists \( P^0 \in \mathbb{N} \) such that \( |f(P)| < \varepsilon g(P) \) for all \( P > P^0 \).
\end{itemize}
\end{definition}

\subsubsection{Proof of Corollary \ref{er_theorem}}
Let the maximum degree \( \Delta(G) \) deviate from the expected degree by a perturbation \( \xi(N) \), i.e.,
\begin{equation}
        \Delta(G) = Np + \xi(N).
\end{equation}
In the large \( N \) limit, the weak Topological Reconstruction Condition (wTRC) from Theorem \ref{topologicalcondition} becomes
\begin{equation}
    \begin{split}
    2Np + 2\xi(N) + 1 &< P(N), \\
    p &< \frac{P(N)}{2N} \left( 1 - \frac{1}{P(N)} - \frac{2\xi(N)}{P(N)} \right).
\end{split}
\end{equation}
Thus, to satisfy the bound asymptotically, the deviation must satisfy
\begin{equation}
    \xi(N) = o(P(N)).
\end{equation}

We now analyze this requirement in two classical regimes of the Erd\H{o}s-Rényi graph:

Case 1: (Supercritical regime) Suppose \( p = \frac{\log N}{N}(1 - \epsilon) \) for some fixed \( \epsilon \in (0,1) \). It is known that in this regime, the maximum degree satisfies
\begin{equation}
     \Delta(G) = \mathcal{O}\left( \frac{\log N}{\log\log N} \right)
\end{equation}
with high probability \cite{frieze2015introduction}. Hence, taking \( \xi(N) = \mathcal{O}\left( \frac{\log N}{\log\log N} \right) \), we find that if \( P(N) = \Omega(\log N) \), then
\begin{equation}
     \xi(N) = o(P(N)).
\end{equation}
Therefore, wTRC is satisfied with high probability, ensuring unique topological reconstruction.

Case 2: (Connected regime) Suppose \( p = \frac{\log N}{N}(1 + \epsilon) \) for some fixed \( \epsilon \in (0,1) \). Then the graph is connected with high probability, and it is well known (e.g., \cite{frieze2015introduction}) that:
\begin{equation}
        \mathbb{P}\left(\Delta(G) \leq \mathbb{E}[d] + \mathcal{O}(\sqrt{Np\log N}) \right) \to 1.
\end{equation}
In this case, since \( p = \Theta\left( \frac{\log N}{N} \right) \), we find
\begin{equation}
      \xi(N) = \mathcal{O}(\sqrt{\log^2 N}) = \mathcal{O}(\log N).
\end{equation}
Thus, if \( P(N) = \Omega(\log^2 N) \), we again have
\begin{equation}
     \xi(N) = o(P(N)).
\end{equation}
and wTRC holds with high probability.  $\square$

\subsection{Proof of Theorem \ref{P1-main}} 

We define  $(P0) \sim (P1)$ regime as follows: 
\begin{definition}\label{P0-P1-eq}
    The region in which the solution of the $(P1)$ problem is unique and it is identical to the solution to $(P0)$ problem is called the
$(P0)\sim (P1)$ equivalence regime.
\end{definition}

In the $(P0)\sim (P1)$ equivalence regime, the unique solution to $(P1)$ problem is determined, and it is shown to coincide with the unique solution of the $(P0)$ problem \cite{candes2005decoding}. This equivalence regime not only expands the region where the $(P0)$ problem admits a unique solution, but also makes this region computationally tractable.

To establish the Theorem \ref{P1-main}, we invoke the following lemma: 
\begin{lemma}\label{noiseless}
    Assume \(\delta_{2s}(\phi)<\sqrt{2}-1.\) Then the solution \(x_*\) to \( P_1\) problem obeys
    \begin{equation}
        \|x-x_*\|_2 \leq c_0 s^{-1/2}  \|x-x_s\|_1
    \end{equation}
for some constant \(c_0\) where \(x_s\) is the vector x with all but the \(s\)-largest entries set to zero. The recovery is exact if \(x\) is \(s\)-sparse \cite{candes2008restricted}.
\end{lemma}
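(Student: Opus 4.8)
The plan is to reproduce the restricted-isometry argument of Candès \cite{candes2008restricted}, of which the stated inequality is precisely the noiseless case. Let $x_*$ be the $\ell_1$ minimizer and set $h = x_* - x$. Two facts drive everything: since $x$ is feasible for $(P1)$ and $x_*$ is optimal, $\|x_*\|_1 \le \|x\|_1$; and since both obey the constraint $\phi(\cdot) = y$, the error lies in the kernel, so $\phi h = 0$. No other structure of the reconstruction problem is needed, so the lemma is a statement purely about $\ell_1$ minimization under the RIP.

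First I would introduce the block decomposition. Let $T_0$ be the index set of the $s$ largest-magnitude entries of $x$, so that $x_{T_0^c} = x - x_s$. I sort the entries of $h$ on $T_0^c$ in decreasing magnitude and split them into consecutive size-$s$ blocks $T_1, T_2, \dots$, writing $T_{01} = T_0 \cup T_1$. Expanding the optimality inequality $\|x\|_1 \ge \|x+h\|_1$ over $T_0$ and $T_0^c$ via the triangle inequality gives the cone bound
\[
\|h_{T_0^c}\|_1 \le \|h_{T_0}\|_1 + 2\|x_{T_0^c}\|_1 .
\]
The standard shift estimate $\|h_{T_j}\|_2 \le s^{-1/2}\|h_{T_{j-1}}\|_1$ then yields, after summing over $j\ge 2$ and using $\|h_{T_0}\|_1 \le s^{1/2}\|h_{T_0}\|_2$,
\[
\sum_{j\ge 2}\|h_{T_j}\|_2 \le \|h_{T_{01}}\|_2 + 2 s^{-1/2}\|x_{T_0^c}\|_1 .
\]

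Next I would invoke the RIP. From $\phi h = 0$ we have $\phi h_{T_{01}} = -\sum_{j\ge 2}\phi h_{T_j}$, and the lower RIP bound gives $(1-\delta_{2s})\|h_{T_{01}}\|_2^2 \le \|\phi h_{T_{01}}\|_2^2$. Writing the right side as $-\sum_{j\ge2}\langle \phi h_{T_0}+\phi h_{T_1},\, \phi h_{T_j}\rangle$ and applying the RIP consequence $|\langle \phi u,\phi v\rangle| \le \delta_{2s}\|u\|_2\|v\|_2$ for disjointly supported $u,v$ of size at most $s$ each, together with $\|h_{T_0}\|_2 + \|h_{T_1}\|_2 \le \sqrt2\,\|h_{T_{01}}\|_2$, produces
\[
\|h_{T_{01}}\|_2 \le \rho \sum_{j\ge2}\|h_{T_j}\|_2, \qquad \rho := \frac{\sqrt2\,\delta_{2s}}{1-\delta_{2s}} .
\]
Substituting the tail estimate gives $\|h_{T_{01}}\|_2 \le \rho\big(\|h_{T_{01}}\|_2 + 2s^{-1/2}\|x_{T_0^c}\|_1\big)$, which can be solved for $\|h_{T_{01}}\|_2$ precisely because the hypothesis $\delta_{2s} < \sqrt2 - 1$ is equivalent to $\rho < 1$.

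Finally I would assemble $\|h\|_2 \le \|h_{T_{01}}\|_2 + \sum_{j\ge2}\|h_{T_j}\|_2$; both terms are now controlled by a multiple of $s^{-1/2}\|x_{T_0^c}\|_1 = s^{-1/2}\|x - x_s\|_1$, which gives the bound with an explicit $c_0 = c_0(\delta_{2s})$. If $x$ is $s$-sparse then $x_{T_0^c}=0$, the right side vanishes, and $x_* = x$, which is the claimed exact recovery. I expect the delicate step to be the RIP inner-product estimate between images of disjointly supported vectors together with the bookkeeping of the cross terms: extracting the sharp factor $\rho = \sqrt2\,\delta_{2s}/(1-\delta_{2s})$, and hence the exact threshold $\sqrt2-1$ rather than a cruder constant, is where the care lies.
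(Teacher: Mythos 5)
Your proposal is correct: it faithfully reproduces the argument of Cand\`es' 2008 paper (the cone constraint from $\ell_1$ optimality, the block decomposition $T_0, T_1, T_2, \dots$, the disjoint-support inner-product bound under RIP, and the constant $\rho = \sqrt{2}\,\delta_{2s}/(1-\delta_{2s})<1$ giving the threshold $\sqrt{2}-1$). The paper itself offers no proof of this lemma --- it is invoked directly as a known result via the citation \cite{candes2008restricted} --- so your reconstruction is exactly the argument the paper is relying on, and nothing more needs to be said.
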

\subsubsection{Proof of Theorem \ref{P1-main}}
The mean-field measurements are given by
\begin{equation}
    y^q(1) = \phi_q x^q(1),
\end{equation}
for each $q\in [N]$.
First, by Lemma \ref{noiseless}, the optimization problem,
\begin{equation}
     \min\|\tilde{x}^q(1)\|_1 \text{  subject to } y^q(1) = \phi_q \tilde{x}^q(1),
\end{equation}
admits a unique solution $x^q_*(1)$ if
\begin{equation}
    \|x^q(1)\|_0\leq s,
\end{equation}
where $s =\Delta(G)+1$.

Then, the sufficient condition for $(P1)$ unique reconstructibility simplifies to 
\begin{equation}\label{reco_simple}
    \|x^q(1)\|_0\leq \Delta(G)+1, \text{ for all }q\in [N].
\end{equation}

Since Lemma \ref{havingL1} establishes that $\|x^q(1)\|_0\leq d_q+1$ for any $q\in [N]$, and by definition, the maximum out-degree of the network is
\begin{equation}
    \Delta(G) = \max_{q\in[N]}d_q,
\end{equation}
it follows that
\begin{equation}
    |\supp(x^q(1))|\leq \Delta(G)+1, \text{ for all }q\in [N].
\end{equation}
Consequently, by Corollary \ref{corollary_reconstructable}, the network $\mathcal{G}$ is topologically reconstructible. 
 $\square$
\subsection{Proof of Theorem \ref{two-stage}}

The mean-field measurements are given by
\begin{equation}
    y^q(t) = \phi_q x^q(t),
\end{equation}
for each $q\in [N]$ and each $t\in [T]$.

First, by Lemma \ref{noiseless}, the optimization problem,
\begin{equation}
     \min\|\tilde{x}^q(t)\|_1 \text{  subject to } y^q(t) = \phi_q \tilde{x}^q(t),
\end{equation}
admits a unique solution $x^q_*(t)$ if
\begin{equation}
    \|x^q(t)\|_0\leq s,
\end{equation}
where $s =[\Delta(G)+1]^{s_{\max}}$.

Lemma \ref{havingL1} establishes that $\|x^q(1)\|_0 \leq d_q+1$ for any $q \in [N]$. Since the maximum out-degree is defined as
\begin{equation}
    \Delta(G) = \max_{q \in [N]} d_q,
\end{equation}
it follows that
\begin{equation}
    |\supp(x^q(1))| \leq \Delta(G)+1, \quad \text{for all } q \in [N].
\end{equation}
Since at each time step the support of $x^q(t)$ may grow by at most $\Delta(G)+1$, we conclude by induction that
\begin{equation}
    |\supp(x^q(t))| \leq [\Delta(G)+1]^t, \quad \text{for all } q \in [N], \, t \in [T].
\end{equation}
By assumption, $T = s_{\max}$, and so we obtain
\begin{equation}
    |\supp(x^q(t))| \leq [\Delta(G)+1]^{s_{\max}}, \quad \text{for all } q \in [N], \, t \in [T].
\end{equation}

 Thus, the recovered state vectors $x^q(t)$ are unique for every $q \in [N]$, and $t\in [T]$. 
 
The system evolves according to the oracle dictionary:
\begin{equation}
    x_i(t+1) = \sum_{l=1}^{s_i} c_{i,l} \, \psi_i^l(x(t)).
\end{equation}

Stacking the system over time, we obtain:
\begin{equation}
    \begin{pmatrix}
        x_i(1)\\
        x_i(2)\\
        \vdots\\
        x_i(T)
    \end{pmatrix} =
    \underbrace{
    \begin{pmatrix}
        \psi_i^1(x(0)) & \cdots & \psi_i^{s_i}(x(0)) \\
        \psi_i^1(x(1)) & \cdots & \psi_i^{s_i}(x(1)) \\
        \vdots         & \ddots & \vdots \\
        \psi_i^1(x(T-1)) & \cdots & \psi_i^{s_i}(x(T-1))
    \end{pmatrix}}_{\psi_i \in \mathbb{R}^{T \times s_i}}
    \begin{pmatrix}
        c_{i,1}\\
        \vdots\\
        c_{i,s_i}
    \end{pmatrix}.
\end{equation}

The coefficients $c_{i,l}$ are assumed to be non-zero by the definition of the oracle dictionary. For the coefficient vector to be uniquely identifiable, it is necessary that the matrix $\psi_i$ has full column rank. The necessary condition is that
\begin{equation}
    T \geq s_i, \quad \text{for each } i \in [N].
\end{equation}
Since $T = s_{\max}$ and $s_i \leq s_{\max}$, this condition is satisfied. Therefore, the coefficient vector $c_i = (c_{i,1}, \ldots, c_{i,s_i})^T$ is uniquely determined.

Thus, the maximum time step is the minimum value assuring the existence of the unique coefficients.

Observe that this is for any state vector, but we only have access to pinching states. So, we will use an enlarged dictionary matrix to suffice the unique coefficients.
 
 Item (i) follows from the correspondence in Theorem \ref{P1-main}.

Item (ii):
Given that each $\psi_i$ has full column rank and $Ns_{\max}>s_i$ for each $i\in[N]$, the pseudoinverse $\Psi_i^\dagger$ simplifies to the left inverse: 
\begin{equation}
    \Psi_i^\dagger = (\Psi_i^T\Psi_i)^{-1}\Psi_i^T.
\end{equation}
Since the quadratic loss in the associated least-squares problem is strictly convex (due to full rank), the solution
\begin{equation}
    \hat{c}_i = (\Psi_i^T\Psi_i)^{-1}\Psi_i^Tx_i^{(q)}(1,\cdots,s_{\max}),
\end{equation}
is unique.
 $\square$

 \bibliographystyle{plain}
\bibliography{references}
\end{document}